\newtheorem{teo}{Theorem}[section]
\newtheorem*{teo*}{Theorem}
\newtheorem{lem}[teo]{Lemma}
\newtheorem{cor}[teo]{Corollary}
\newtheorem{pro}[teo]{Proposition}
\theoremstyle{definition}
\newtheorem{fed}[teo]{Definition}
\theoremstyle{remark}
\newtheorem{rem}[teo]{Remark}
\newtheorem{exa}[teo]{Example}
\def\eps{\varepsilon}
\def\R{\mathbb{R}}
\def\C{\mathbb{C}}
\def\cH{\mathcal{H}}
\def\cI{\mathcal{I}}
\def\cJ{\mathcal{J}}
\def\cK{\mathcal{K}}
\def\cL{\mathcal{L}}
\def\cS{\mathcal{S}}
\def\cM{\mathcal{M}}
\def\cU{\mathcal{U}}
\def\ese{\mathcal{S}}
\def\noi{\noindent}
\def\bdem{\begin{proof}}
\def\edem{\renewcommand{\qed}{\hfill $\blacksquare$}
\end{proof}}
\newcommand{\peso}[1]{ \quad \text{ #1 } \quad }
\newcommand{\sub}[2]{{#1}_{\mbox{\tiny{${#2}$}}}}
\DeclareMathOperator{\sgn}{sgn}
\DeclareMathOperator{\ran}{ran}
\DeclareMathOperator{\rank}{rank}
\newcommand{\pint}[1]{\displaystyle \left \langle #1 \right\rangle}
\newcommand{\PI}[2]{\left\langle #1 , #2 \right\rangle}
\DeclareMathOperator*{\smayo}{\prec}
\title{Approximation by partial isometries and symmetric approximation of finite frames}
\author{Jorge Antezana, Eduardo Chiumiento}
\date{}
\begin{document}

\maketitle 

\begin{abstract}
We solve the problem of best approximation by partial isometries of given rank to an arbitrary rectangular matrix, when the distance is measured in any unitarily invariant norm. In the case where the norm is strictly convex, we parametrize all the solutions. In particular, this allow us to  give a simple necessary and sufficient condition for uniqueness. We then apply these results to solve the global problem of approximation by partial isometries, and to  extend  the notion of symmetric approximation of frames  introduced in  M. Frank, V. Paulsen, T. Tiballi, {\it Symmetric Approximation of frames and bases in Hilbert Spaces}, Trans. Amer. Math. Soc. 354 (2002), 777-793. In addition, we characterize symmetric approximations of frames belonging to a prescribed subspace.
\end{abstract}

\textit{2010 Mathematics Subject Classification.}  41A29, 41A52, 15A18,  42C15.  

\textit{Keywords.}  Partial isometry; Polar decomposition; Singular value decomposition;  Symmetric approximation; Finite frame; Unitarily invariant norms; L\"owdin orthogonalization.  

\section{Introduction}

%
%
%
%


In 1947, L\"owdin \cite{L47} or \cite{L70}, introduced a method to obtain an orthonormal basis from a given basis  motivated by problems arising in quantum chemistry. Nowadays this method is known as \textit{L\"owdin orthogonalization}, and it has two relevant advantages with respect to the classical Gram-Schmidt process: first, it is an order-independent procedure, and second, the constructed orthonormal basis is characterized as the closest orthonormal basis to the given basis.  Frank,  Paulsen and Tiballi \cite{FPT02} generalized L\"odwin orthogonalization to frames in Hilbert spaces, under the name of \textit{symmetric approximation of frames}. As we will show below, this might be seen as a best approximation problem by partial isometries. The aim of this paper is to study best approximation problems by partial isometries, and apply the results to  symmetric approximation of frames.


A family of vectors $\{ f_j\}_1^n$ spanning a subspace $\cK\subseteq \C^m$ is called a \textit{finite frame for} $\cK$. In general, frames are redundant spanning sets, and this is the key property which makes them useful to applications such as internet coding, quantum computing, filter banks, robust transmission and speech recognition
(see e.g. \cite{CK12, K1, K2}).  If the frame $\{ f_j\}_1^n$  satisfies the Parseval identity 
$$
\|f\|^2=\sum_{j=1}^n|\pint{f,f_j}|^2
$$
for every $f\in\cK$, then it is called a \textit{Parseval frame} (\textit{or normalized tight frame}). For this class of frames, it is not difficult to prove that for every $f\in\cK$, 
$$
f=\sum_{j=1}^n\pint{f,f_j}f_j.
$$
This may be interpreted as a reconstruction formula, which shows that $f$ can be recovered from the coordinates $\{\pint{f,f_j}\}_{1}^n$ with the same formula used for orthonormal bases. It is important to remark that general frames still have a reconstruction formula comparable to that of bases; however its expression turns out to be more complicated. This is one of the main reasons why Parseval frames are relevant in applications.




The method of symmetric approximation of frames consists in finding the closest Parseval frame to a given frame.   
More precisely,  a Parseval frame $\{ u_j \}_1^n$ is said to be a symmetric approximation of a frame $\{ f_j \}_1^n$ for a subspace $\cK \subseteq \C^m$
if 
\begin{equation*}
\sum_{j=1}^n \| f_j - u_j\|^2 \leq \sum_{j=1}^n \| f_j - x_j\|^2
\end{equation*}
for all the Parseval frames $\{ x_j\}_1^n$ which are \textit{weakly similar} to $\{ f_j\}_1^n$. This latter condition means that there is an invertible matrix $T$ satisfying $Tf_j=x_j$, $j=1, \ldots, n$, where $\cL$ is the spanning subspace of $\{x_j\}_1^n$. We would like to emphasize that, a priori, the subspace spanned by the Parseval frame $\{ u_j \}_1^n$ does not need to be the subspace $\cK$.
Note that, as this method is order-independent,  it becomes useful for many types of frames, such as those associated with finite groups
\cite{F}, where there is no canonical  order of vectors to apply Gram-Schmidt.

 The symmetric approximation of frames can be rewritten as a best approximation problem by partial isometries using the Frobenious norm  $\| \, \cdot \, \|_2$. Recall that an $m \times n$ matrix $X$ is a \textit{partial isometry} if $\|Xf\|=\|f\|$ for all $f \in \ker(F)^\perp$.
Let $F$ be the synthesis matrix of the frame $\{ f_j\}_1^n$ for a subspace $\cK \subseteq \C^m$,  i.e. $F$ is the $m \times n$ matrix whose columns are the vectors  $f_1, \ldots, f_n$.  It is straightforward to prove that the synthesis matrix of a Parseval frame for some subspace is a partial isometry. Then, the Parseval frame $\{  u_j\}_1^n$ with synthesis matrix $U$ is a symmetric approximation of $\{ f_j \}_1^n$ if
$$
\|F - U\|_2 \leq \|F-X\|_2
$$ 
for all $m \times n$ partial isometries $X$ such that $\ker(X)=\ker(F)$.  This condition on the kernels of the synthesis matrices is easily seen equivalent to being the respective frames weakly similar.    The formula for the unique  symmetric approximation $\{ u_j\}_1^n$ proved in \cite{FPT02} is given by $u_j=Ue_j$, $j=1,\ldots, n$, where $\{e_j \}_1^n$ is the standard basis of $\C^n$ and  $F=U|F|$ is the \textit{canonical polar decomposition} (i.e. $U$ is the unique partial isometry satisfying $F=U|F|$ and $\ker(U)=\ker(F)$). This frame $\{ u_j\}_1^n$ is called the \textit{canonical Parseval frame} associated to $\{ f_j\}_1^n$, and satisfies that span $\{ u_j \}_1^n$ = span $\{ f_j\}_1^n$. 
 Now we remark that L\"odwin orthogonalization may be thought as a special case when $\{ f_j \}_1^n$ is  a basis, and  $\{ u_j\}_1^n$ turns out to be an orthonormal basis (see \cite{GL}).

\subsection{Main results of this paper}

The authors in \cite{FPT02} showed a simple example to illustrate that the canonical Parseval frame is not the closest Parseval frame if one allows to consider  non weakly similar frames in the minimization problem.  This fact leads to the following question: which is the closest Parseval frame to a given frame when one  considers non weakly similar Parseval frames? On the other hand, the assumption of weakly similarity  conserves the redundancy of the given frame $\{ f_j \}_1^n$, i.e. the linear dependence of the vectors $f_1, \ldots, f_n$. But the redundancy, and thus the dimension of the subspace spanned by $\{ f_j \}_1^n$, are very sensitive to perturbations. 
It is therefore  natural  to ask  which is the closest Parseval frame to $\{ f_j\}_1^n$ with a fixed redundancy.

We address these questions in terms of  best approximation problems by partial isometries. Further, we consider more general norms than the Frobeniuous norm, the class of unitarily invariant norms (see Section \ref{Prel and not} for the definition). For $k=1, \ldots, q=\min\{ m, \,n \}$, denote by  $\cI_{m,n}^k$  the set of $m \times n$ partial isometries of rank $k$.  
The results of this paper are the following. 
\begin{itemize}
\item A solution to the best approximation problem by partial isometries  from $\cI_{m,n}^k$ for every unitarily invariant norm (Theorem \ref{teo 1 matrix});
\item A parametrization of all the minimizers in the previous problem when the norm is defined by a strictly convex symmetric gauge function, and in particular, a uniqueness condition (Theorems \ref{teo 1 sub 1 matrix} and \ref{parametrization of min});
\item A solution to the global best approximation problem by  partial isometries for every  unitarily invariant norm (Theorem \ref{teo 3 matrix});
\item An extension of the method of symmetric approximation of frames to the following families: Parseval frames with fixed redundancy,  all the Parseval frames and Parseval frames in a fixed subspace (Corollaries \ref{fixed ex} and \ref{frames global}, Theorem \ref{fix the subspace}). 
\end{itemize}  

The main theoretical tools for the proofs are the singular value decomposition and the Lidskii-Mirsky-Wielandt theorem \cite{B97, LM99}. Uniqueness results rely on the analysis of the equality case in this latter theorem recently carried out in \cite{MRS14}. The solution of the lower rank and global best approximation problems by partial isometries may be described using the canonical partial isometry associated to a best lower rank approximation; meanwhile in the higher rank approximation case the solutions are given by the partial isometries associated to (non canonical) polar decompositions (see Remark \ref{canonical of best rank app}).


  
\subsection{Previous related results}

	Problems of best approximation by partial isometries  have been considered by several authors. 
In the case where $k=n\leq m$,  it is well known that the isometric factor of a polar decomposition of $F$ solves the best approximation problem by partial isometries from $\cI_{m,n}^k$.  
Here, by a polar decomposition we mean a (in general non unique) representation $F=U|F|$, where $U$ is an isometry (i.e. $U^*U=I$) and $|F|=(F^*F)^{1/2}$. The earliest  result of this type is the Fan-Hoffmann theorem  \cite{FH55}, which established  the case $m=n$. Later on, Rao \cite{R80} stated without proof the case $n<m$. This  was proved more recently by   Laszkiewicz and Zi\c{e}tak  \cite{LZ06}. They also treated the case $k=\rank(F)$, and showed that  the partially isometric factor of the canonical polar decomposition is a solution. For applications in matrix ODEs which have orthogonal solutions we refer to \cite[Section 2.6]{H08} and the references therein.    
A special class of solutions to the global best approximation by the partial isometries were also given in \cite{LZ06}.
On the other hand, little attention has been paid in the literature to uniqueness questions, or parametrization of the minimizers in the case of multiple minimizers. For the fixed rank problem, it is known that when $F$ has full rank and $k=n\leq m$,  there is an unique minimizer  (see  \cite[Thm 8.2]{H08}).   

Other results concerning best approximation by partial isometries were proved for  operators  defined in infinite dimensional spaces and special classes of norms (see \cite{M89, W86}). The notion of symmetric approximation of frames in infinite dimensional Hilbert spaces was also developed in \cite{FPT02}. In addition,  best approximation of frames with additional structure was studied  by Janssen and Strohmer \cite{JS02}, and by Han \cite{H1, H2}.
Although the results obtained in our paper are in the finite dimensional setting, they can be also applied to infinite dimensional settings where, due to some invariance, the problem can be reduced to finite dimensional problems. Some examples of this situation, of major relevance in harmonic analysis, are Gabor frames, shift-invariant frames and wavelets by Parseval frames of the same class (see \cite{Bo}, \cite{CP} and \cite{RS}).


\bigskip

The paper is organized  as follows. In Section \ref{Prel and not} we establish notation and give the necessary background. 
In Section \ref{main} we present the  results concerning approximation by partial isometries. Section \ref{sec frames} is devoted to the applications in symmetric approximation of frames. In Section \ref{proofs} we prove the main results.






\section{Preliminaries and notation}\label{Prel and not}

Let $\cM_{m,n}$ be the space of  complex $m \times n$ matrices. When $m=n$, we  write $\cM_n$.
Set $q=\min\{ m,\, n \}$. 
Given  $F \in \cM_{m,n}$, 
the vector of  singular values of $F$ (i.e. the  eigenvalues of $|F|=(F^*F)^{1/2}$) arranged in nonincreasing order is given by $s(F)=(s_1(F), \ldots, s_q(F))$. Note that there are at most $q$ non-zero eigenvalues of $|F|$, so we are only  taken into account its largest $q$ eigenvalues.

\medskip

\noi \textbf{Partial isometries}. A matrix $X \in \cM_{m,n}$ is a \textit{partial isometry} if $\| Xf\|=\|f\|$ for all $f \in \ker(X)^{\perp}$. 
This is equivalent to saying that $X^*X$ is a projection (or $XX^*$ is a projection).
Another characterization of a partial isometry is as a matrix whose singular values are all $0$ or $1$.  We will use the following notation
$$
\cI_{m,n}=\{ \, X \in \cM_{m,n}   \,  : \, X \text{ is a partial isometry}  \,   \}.
$$
The connected components of the set of all partial isometries 
are determined by the rank. We denote each connected component by
$$
\cI_{m,n}^k=\{ \, X \in \cI_{m,n} \,  : \, \rank(X)=k  \,  \},
$$
where $k=1, \ldots , q$.  Let $\cU_n$ denote the group of $n \times n$ unitary matrices.  We note  that there is a left action of the group $\cU_m \times \cU_n$   on $\cI_{m,n}$ given by 
$$
(V,W) \cdot X:=VXW^* , \, \, \, \, V \in \cU_m, W \in \cU_m , \, X \in \cI_{m,n}\, .
$$
Furthermore, the orbits of this action coincide with the connected components of $\cI_{m,n}$ (see for instance \cite{HM63}).

\medskip

\noi \textbf{Singular value decomposition.}
 Let $F \in \cM_{m,n}$ be of rank $r$. Then $F$ has a \textit{singular value decomposition (SVD)}   
$$ F=V   \Sigma    W^*,$$ 
where 
$$
\Sigma=\begin{bmatrix}  \Sigma_r & 0 \\ 0 & 0_{m-r,n-r}  \end{bmatrix}, 
$$
$V \in \cU_m$, $W \in \cU_n$ and $\Sigma_r=diag(s_1(F), \ldots, s_r(F))$. The columns $\{w_1,\ldots,w_n\}$ of $W$ form an orthonormal basis of eigenvectors of $F^*F$, while the columns $\{v_1,\ldots,v_m\}$ of $V$ are eigenvectors of $FF^*$.

\begin{rem}\label{unitaries SVD}
Note that, although the matrix $\Sigma$ is completely determined by the non-zero singular values, the unitary matrices are not unique.
If $F=\tilde{V} \Sigma \tilde{W}^*$ is another SVD, then
$$
\tilde{V}=V\begin{bmatrix} D & 0 \\ 0 & R_1  \end{bmatrix}, \, \, \, \, \, \, \tilde{W}=W \begin{bmatrix} D & 0 \\ 0 & R_2  \end{bmatrix},
$$ 
where $D \in \cU_r$ is  block diagonal  with $D_{ij}=0$ if $s_i(F)\neq s_j(F)$, and $R_1 \in \cU_{m-r}$, $R_2 \in \cU_{n-r}$ are arbitrary.
\end{rem}

\medskip

\noi \textbf{The polar decomposition.} Given a matrix $F \in \cM_{m,n}$, there is a unique decomposition $F=U|F|$ satisfying the conditions $U \in \cI_{m,n}$ and $\ker(U)=\ker(F)$. This decomposition is called the \textit{canonical polar decomposition}.  The  partial isometry $U$ of the canonical polar decomposition is  the \textit{canonical partial isometry}. 

If the matrix $F$ has an SVD given by $F=V\Sigma W^*$ and rank equal to $r$, the canonical partial isometry can be expressed as
\begin{equation}\label{canon part iso}
U=V \begin{bmatrix}   I_r  & 0 \\  0 & 0_{m-r,n-r}      \end{bmatrix} W^*.
\end{equation}
This expression does not depend on the pair of matrices $V$, $W$ that one considers for an SVD. 

\begin{rem}\label{cdp uniq} The requirement that $\ker(U)=\ker(F)$ in the canonical polar decomposition forces $U$ to be unique. When $r=q \, (=\min\{ m , \, n \})$, there is also a unique partial isometry $X$ such that $F=X|F|$. Indeed,  $X$ is given by the  formula (\ref{canon part iso}) after deleting all the zero rows in the second factor when $q=n$ or all the zero columns when $q=m$.  
In the case in which $r<q$ there are infinitely many partial isometries $X$ satisfying $F=X|F|$. It is well known that all these  partial isometries are 
described by
$$
X=V \begin{bmatrix}   I_r  & 0 \\  0 & S      \end{bmatrix} W^*.
$$
where $S \in \cM_{m-r,n-r}$ is a partial isometry (see for instance \cite[Thm. 8.1]{H08} or \cite{HJ}). 
\end{rem}

\medskip

\noi \textbf{Majorization, unitarily invariant norms and symmetric gauge functions.}
Let $x=(x_1, \ldots, x_q)$ be a vector in $\R^q$. We denote by $x^{\downarrow}$ the vector obtained by rearranging
the coordinates of $x$ in  nonincreasing order. This means that  
$x^{\downarrow}=(x_1^{\downarrow}, \ldots, x_q^{\downarrow})$ satisfies $x^{\downarrow}_1\geq \ldots \geq x^{\downarrow}_q$.
Given two vectors $x,y \in \R^q$, we write $x \smayo_w  y$ if 
\begin{equation}\label{def may}
\sum_{i=1}^k x^{\downarrow}_i \leq \sum_{i=1}^k y^{\downarrow}_i , \, \, \, \, \, \, \, \,  k=1, \ldots,q. 
\end{equation}
In this case, we say that $x$ is \textit{submajorized} by $y$ and we write $x\smayo_w y$. If, in addition, there is an equality for $k=q$ in (\ref{def may}), then we write  $x \smayo y$ and  we say that $x$ is \textit{majorized} by $y$. 

A norm $\Phi$ on $\R^q$ is called a \textit{symmetric gauge function} if it satisfies  
$$
\Phi(x_1,  \ldots , x_q)=\Phi(|x_{\sigma(1)}|,  \ldots , |x_{\sigma(q)}|),
$$
for every permutation $\sigma$ of the integers $1,\ldots, q$. Symmetric gauge functions can be used to define norms in $\cM_{m,n}$. Indeed, if $q=\min\{m,n\}$ and $\Phi$ is a symmetric gauge function on $\R^q$, one can define a  norm on $\cM_{m,n}$ by
\begin{equation}\label{nuis in lch}
\| F\|_\Phi = \Phi(s_1(F), \ldots , s_q(F)).
\end{equation}
From this expression in terms of the singular values, these norms  turn out to be unitarily invariant; that is,
$
\|WFU\|_\Phi =\|F\|_\Phi \,, 
$
whenever $F\in \cM_{m,n}$,  $U \in \cU_{n}$ and  $W \in \cU_{m}$. Conversely, every unitarily invariant norm in $\cM_{m,n}$ can be constructed as in formula (\ref{nuis in lch})  for some symmetric gauge function (see e.g. \cite[Thm. 3.5.18]{HJ91}).

Many well-known norms are unitarily invariant norms. If the symmetric gauge  function is the $\ell^p$ norm, for $p\geq 1$, then one gets the \textit{$p$-Schatten norms} on the space $\cM_{m,n}$. If $p=\infty$, then we get the usual  operator norm. If $p=2$, then we have the \textit{Frobenious norm}, which has the following expressions
$$
\| F \|_2 = \left( \sum_{i=1}^q s_i ^2(F) \right )^{1/2}=\left(\sum_{i=1}^n \| F v_i\|^2\right)^{1/2}; 
$$
where $\{ v_i \}_1^n$ is any orthonormal basis of $\C^n$.
Another relevant norms are the \textit{Ky-Fan $k$-norms} ($1\leq k \leq q$) given by 
$$
\| F\|_{(k)}=\sum_{i=1}^k s_i(F),
$$
where the symmetric norms are $\Phi_{(k)}(x_1, \ldots , x_q)=\sum_{i=1}^k |x|_i^{\downarrow}$.

The connection between the notion of majorization and the unitarily invariant norms is the following result, known as the Ky-Fan dominance principle:

\begin{teo}\label{KF dominance}
Let $x,y\in\R^q$,  then the following statements are equivalent:
\begin{itemize}
\item[i.)] $\Phi(x)\leq \Phi(y)$ for every symmetric gauge function $\Phi$;
\item[ii.)] $x\smayo_w y$.
\end{itemize}
\end{teo}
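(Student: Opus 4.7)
The plan is to prove the two directions separately, after a reduction that lets us work with nonnegative, nonincreasingly ordered vectors. Since every symmetric gauge function satisfies $\Phi(x) = \Phi(x^{\downarrow})$ (indeed $\Phi(x) = \Phi(|x_{\sigma(1)}|, \ldots, |x_{\sigma(q)}|)$ by definition), and since submajorization is defined in terms of $x^{\downarrow}$ and $y^{\downarrow}$, I may replace $x, y$ by $|x|^{\downarrow}, |y|^{\downarrow}$ at the outset and assume they both lie in the nonnegative cone $\R^q_+$ with coordinates in nonincreasing order.

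The implication (i) $\Rightarrow$ (ii) is the easy direction: each Ky-Fan $k$-norm $\Phi_{(k)}(z) = \sum_{i=1}^k |z|_i^{\downarrow}$ is itself a symmetric gauge function, so applying (i) to $\Phi = \Phi_{(k)}$ for each $k = 1, \ldots, q$ gives exactly the $q$ inequalities in (\ref{def may}), i.e.\ $x \smayo_w y$.

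For (ii) $\Rightarrow$ (i), I would use the well-known refinement of submajorization: $x \smayo_w y$ with $x, y \in \R^q_+$ sorted is equivalent to the existence of $z \in \R^q_+$ with $x \leq z$ coordinatewise and $z \smayo y$ (one proves this by a greedy construction, pushing mass from the tail of $y$ to build $z$). Then I would split the argument into two estimates:
\begin{equation*}
\Phi(x) \leq \Phi(z) \leq \Phi(y).
\end{equation*}
The first inequality follows from the monotonicity of $\Phi$ on $\R^q_+$: if $0 \leq a \leq b$ coordinatewise, then replacing $b_i$ by $a_i$ at a single coordinate can be written as a convex combination of $b$ with a sign-flipped version of $b$, both of which have the same $\Phi$-value, so $\Phi$ decreases; iterating over coordinates gives $\Phi(a) \leq \Phi(b)$. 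The second inequality uses Birkhoff's theorem: since $z \smayo y$ (full majorization), by Hardy--Littlewood--Pólya $z$ lies in the convex hull of the permutations $P_\sigma y$ of $y$, so by the triangle inequality and the permutation-invariance of $\Phi$,
\begin{equation*}
\Phi(z) \leq \sum_\sigma t_\sigma \Phi(P_\sigma y) = \Phi(y).
\end{equation*}

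I expect the main technical point to be the monotonicity of a symmetric gauge function on the positive cone, since it is the one step that is not purely formal. The convex-combination trick above handles it cleanly, but one has to be a little careful at coordinates where $b_i = 0$ (these are trivial), and one needs to iterate coordinate by coordinate rather than all at once. Once this is in place, the structure of the proof collapses neatly into the monotonicity step followed by the Birkhoff decomposition step, and the reverse direction via Ky-Fan norms is immediate.
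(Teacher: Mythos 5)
Your proof is correct, but note that the paper does not actually prove Theorem \ref{KF dominance}: it is quoted as the known Ky-Fan dominance principle (see e.g.\ \cite[Thm.\ 3.5.18]{HJ91} or \cite{B97}), so there is no in-paper argument to compare against. Your write-up is the standard textbook proof, and every step checks out: the reverse direction via the Ky-Fan norms $\Phi_{(k)}$ is exactly right; the decomposition $x\le z$, $z\smayo y$ is Fan's lemma and is provable by the greedy construction you describe; the sign-flip convex-combination trick correctly yields monotonicity of $\Phi$ on the nonnegative cone; and Hardy--Littlewood--P\'olya plus Birkhoff finishes it. Two remarks. First, the theorem as literally stated for arbitrary $x,y\in\R^q$ with the paper's definition of $\smayo_w$ (no absolute values) is false --- take $x=(-1,-1)$, $y=(0,0)$ --- so your opening reduction to $|x|^{\downarrow},|y|^{\downarrow}$ is really the correct \emph{interpretation} of the statement rather than a harmless normalization; it would be worth saying so explicitly. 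Second, the paper does implicitly contain a shorter route in the proof of Lemma \ref{prop strict conv}: it cites \cite[Ex.\ II.2.10]{B97}, namely that for $x,y\ge 0$ one has $x\smayo_w y$ if and only if $x$ lies in the convex hull of the $2^qq!$ signed permutations of $y$. That single characterization packages your two steps (the lift to $z$ with $z\smayo y$, and the monotonicity of $\Phi$ on the positive cone) into one, after which $\Phi(x)\le\sum\lambda_j\Phi(y_j)=\Phi(y)$ is immediate. Your version is more self-contained, since it proves rather than cites the convexity input; the convex-hull version is shorter and is the one most consistent with the toolkit the paper already invokes.
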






In order to prove uniqueness properties, we will restrict to the following particular class of symmetric gauge functions. A norm $\Phi$ on $\R^q$ is \textit{strictly convex}  when for all $x,y \in \R^q$, $x\neq y$, if $\Phi(x)=\Phi(y)=1$ and $\lambda \in (0,1)$, then 
$$
\Phi(\lambda x+(1- \lambda) y)< 1.
$$
For instance, Schatten $p$-norms are strictly convex if $p \in (1,\infty)$. On the other hand,  Schatten $p$-norms with $p=1,\infty$, and Ky-Fan norms are not strictly convex.

\section{Approximation by partial isometries}\label{main}

Let $\|\cdot\|_\Phi$ be a unitarily invariant norm in $\cM_{m,n}$, where $\Phi$ is a symmetric gauge  function in $\R^q$. As before, we set $q=\min\{m,n\}$. We begin by stating the solution to the best approximation problem of a given matrix $F\in\cM_{m,n}$ by partial isometries in $\cI_{m,n}^k$ with respect to the distance induced by $\|\cdot\|_\Phi$.

\begin{teo}\label{teo 1 matrix}
Let $1\leq k \leq q$, and let $F \in \cM_{m,n}$. Given a singular value decomposition $F=V\Sigma W^*$, the partial isometry
\begin{equation}\nonumber
U_k=V \begin{bmatrix}   I_k & 0  \\   0 & 0_{m-k,n-k}      \end{bmatrix}  W^*
\end{equation}
belongs to $\cI_{m,n}^k$ and satisfies 
\begin{equation}\label{min rank lower}
\| F- U_k\|_\Phi = \min \{ \,  \| F-X \|_\Phi    \, : \,    X \in \cI_{m,n}^k \, \},
\end{equation}
for every unitarily invariant norm $\|\cdot\|_\Phi$.
\end{teo}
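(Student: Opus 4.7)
The plan is to reduce the problem, via the Ky-Fan dominance principle, to a comparison of singular value vectors. By Theorem \ref{KF dominance}, establishing the inequality $\|F - U_k\|_\Phi \leq \|F - X\|_\Phi$ for \emph{every} unitarily invariant norm is equivalent to proving the submajorization
$$
s(F - U_k) \prec_w s(F - X) \qquad \text{for all } X \in \cI_{m,n}^k.
$$
So the task becomes controlling the singular values of $F - X$ from below.

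First, I would verify that $U_k \in \cI_{m,n}^k$ directly from its definition (the middle factor is an orthogonal projection of rank $k$) and then compute the singular values of $F - U_k$ explicitly from the SVD. Since
$$
F - U_k = V \begin{bmatrix} \Sigma_k - I_k & 0 \\ 0 & \widetilde{\Sigma} \end{bmatrix} W^*,
$$
where $\Sigma_k = \mathrm{diag}(s_1(F),\ldots,s_k(F))$ and $\widetilde{\Sigma}$ carries the remaining $s_{k+1}(F),\ldots,s_q(F)$, the nonzero singular values of $F-U_k$ are, after rearrangement, the decreasing reordering of the vector
$$
\bigl(|s_1(F) - 1|, \ldots, |s_k(F) - 1|, \, s_{k+1}(F), \ldots, s_q(F)\bigr).
$$

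Second, I would observe that every $X \in \cI_{m,n}^k$ has the \emph{same} singular value vector as $U_k$, namely $(1,\ldots,1,0,\ldots,0)$ with $k$ ones, since membership in $\cI_{m,n}^k$ is characterized precisely by this property. Then I would apply the classical Mirsky inequality (a consequence of the Lidskii-Mirsky-Wielandt theorem for singular values cited in the paper), which states that for any $A, B \in \cM_{m,n}$ the vector $(|s_i(A) - s_i(B)|)_{i=1}^q$ is weakly majorized by the singular value vector $s(A - B)$. Applying this with $A = F$ and $B = X$, and using $s_i(X) = s_i(U_k)$ for every $i$, we obtain exactly
$$
\bigl(|s_1(F) - 1|, \ldots, |s_k(F) - 1|, s_{k+1}(F), \ldots, s_q(F)\bigr) \prec_w s(F - X).
$$
The left-hand side is the (unsorted) singular value vector of $F - U_k$; since weak submajorization depends only on the decreasing rearrangement, this yields $s(F - U_k) \prec_w s(F - X)$, and the Ky-Fan dominance principle finishes the proof.

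The step that requires the most care is the second one: making sure that the entries $|s_i(F) - 1|$ coming from the top block and the entries $s_j(F)$ coming from the bottom block correspond, after an appropriate permutation, to the singular values of $F - U_k$, and that Mirsky's inequality is invoked in a form that does not presuppose a particular ordering of $|s_i(F) - s_i(X)|$. Once this bookkeeping is settled, the rest is a direct application of results already cited in the preliminaries.
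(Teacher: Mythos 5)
Your proposal is correct and follows essentially the same route as the paper's proof: compute $s(F-U_k)$ from the SVD, note that every $X\in\cI_{m,n}^k$ has singular values $(1,\dots,1,0,\dots,0)$, apply the Lidskii--Mirsky--Wielandt inequality for singular values to get the submajorization $s(F-U_k)\smayo_w s(F-X)$, and conclude by the Ky-Fan dominance principle. The bookkeeping point you flag is handled in the paper exactly as you describe, by taking the decreasing rearrangement of the unsorted vector $(|s_1(F)-1|,\dots,|s_k(F)-1|,s_{k+1}(F),\dots,s_q(F))$.
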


\begin{rem}\label{formulas d}
From the above result, we can compute the distance of a matrix $F$ of rank $r$ to each connected component $\cI_{m,n}^k$ of the partial partial isometries, i.e.
$$
d_\Phi(F,\cI_{m,n}^k)=\min \{ \, \| F- X \|_\Phi \, : \,X \in \cI_{m,n}^k  \, \},
$$
where $k=1,\ldots, q$. Indeed, we have
\begin{itemize}
\item If $k<r$:  $d_\Phi(F,\cI_{m,n}^k)=\Phi(s_1(F)-1,\ldots, s_k(F)-1, s_{k+1}(F), \ldots, s_r(F),0,\ldots,0)$.
\item If $k=r$:  $d_\Phi(F,\cI_{m,n}^r)=\Phi(s_1(F)-1,\ldots, s_r(F)-1, 0,\ldots,0)$.
\item If $k>r$:  $d_\Phi(F,\cI_{m,n}^k)=\Phi(s_1(F)-1,\ldots, s_r(F)-1, \underbrace{1, \ldots, 1}_{k-r}, 0,\ldots,0)$.
\end{itemize}
\end{rem}

\begin{teo}[Uniqueness]\label{teo 1 sub 1 matrix}
Let $1\leq k \leq q$, and let $F \in \cM_{m,n}$. Suppose that   $\Phi$ is a strictly convex symmetric gauge function. Then every minimizer of the above problem  \eqref{min rank lower} has the form
\begin{equation}\nonumber
U_k=V \begin{bmatrix}   I_k & 0  \\   0 & 0_{m-k,n-k}      \end{bmatrix}  W^* ,
\end{equation}
where  $V$ and $W$ are any pair of unitary matrices such that $F=V\Sigma W^*$ is an SVD.
\end{teo}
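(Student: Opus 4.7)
The approach is to show that any minimizer forces equality in the Lidskii--Mirsky--Wielandt (LMW) inequality for singular values, and then to invoke its strictly-convex equality case from \cite{MRS14} to extract an SVD of $F$ that also diagonalizes $X$.

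The first step is a reduction to the equality case. For every pair of matrices in $\cM_{m,n}$ the LMW inequality yields
$$
\|F - X\|_\Phi \;\geq\; \Phi\bigl(s(F) - s(X)\bigr),
$$
valid for every unitarily invariant norm. Since $X\in\cI_{m,n}^k$ forces $s(X)=(\,\underbrace{1,\ldots,1}_{k},0,\ldots,0\,)$ and $\Phi$ is permutation invariant, this lower bound coincides with the minimum value tabulated in Remark \ref{formulas d}. Hence $X$ is a minimizer of \eqref{min rank lower} if and only if equality holds: $\|F-X\|_\Phi=\Phi(s(F)-s(X))$.

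For strictly convex $\Phi$, the equality case established in \cite{MRS14} then forces $F$ and $X$ to share a simultaneous singular value decomposition. Concretely, there exist $V\in\cU_m$ and $W\in\cU_n$ and rectangular diagonal matrices $D_F,D_X$ with
$$
F=V D_F W^*,\qquad X=V D_X W^*,
$$
where the diagonal of $D_F$ is a permutation of $s(F)$ and that of $D_X$ is a permutation of $(\,\underbrace{1,\ldots,1}_{k},0,\ldots,0\,)$. Since \cite{MRS14} is stated in the Hermitian/eigenvalue setting, the passage to rectangular singular values is handled via the block Hermitian dilation $A\mapsto\begin{bmatrix}0&A\\ A^*&0\end{bmatrix}$, whose eigenvalues are $\pm s_i(A)$ together with zeros, and translating the resulting common eigenbasis back to a common SVD pair $(V,W)$.

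To finish, I normalize the ordering: using the SVD freedom (Remark \ref{unitaries SVD}), permute the columns of $V$ and $W$ by a common permutation matrix so that $D_F=\Sigma$ has singular values in decreasing order. The diagonal of $D_X$ is then a $0/1$ vector $\tau$ with exactly $k$ ones; by the Ky-Fan dominance principle (Theorem \ref{KF dominance}) combined with strict convexity of $\Phi$, a swapping argument --- if $\tau_i=1$ for some $i>k$ while $\tau_j=0$ for some $j\leq k$ with $s_j(F)>s_i(F)$, then swapping these entries strictly decreases $\Phi(s(F)-\tau)$ --- forces the ones of $\tau$ to occupy the positions of the top $k$ singular values of $F$. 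The remaining freedom to permute positions of equal singular values is exactly absorbed by the SVD ambiguity from Remark \ref{unitaries SVD}, and one concludes $X=V\begin{bmatrix}I_k&0\\ 0&0_{m-k,n-k}\end{bmatrix}W^*$ with $F=V\Sigma W^*$ an SVD. The main obstacle I anticipate is the rigorous transfer of the strictly-convex equality case of \cite{MRS14} from the Hermitian to the rectangular setting, and in particular handling zero singular values of $F$ (where the SVD is highly non-unique) so that the simultaneous-SVD conclusion still matches the parametrization in the statement.
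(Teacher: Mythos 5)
Your proposal follows the paper's proof essentially step for step: the same reduction to the equality case of the Lidskii--Mirsky--Wielandt inequality (which the paper mediates through Lemma \ref{prop strict conv}, upgrading equality of $\Phi$-norms plus the submajorization $|s(F)-s(X)|\prec_w s(F-X)$ to equality of the vectors themselves), the same Hermitian dilation to transfer the equality case of \cite{MRS14} to singular values (Lemma \ref{rem lidsk eq sing val}), the same passage to a simultaneous diagonalization $F=VD_FW^*$, $X=VD_XW^*$, and the same final diagonal optimization whose residual freedom is absorbed by the SVD ambiguity of Remark \ref{unitaries SVD}. (The paper runs that last optimization in the Frobenius norm, which is legitimate because the vector equality $s(F-X)=s(F-U_k)$ makes $X$ a minimizer for every unitarily invariant norm; your swapping argument directly in $\Phi$ is an equivalent variant, provided each swap is justified by strict submajorization together with Lemma \ref{prop strict conv}.)

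The one genuine gap is the claim that the diagonal of $D_X$ is a permutation of $(1,\dots,1,0,\dots,0)$. The simultaneous diagonalization, with $D_F$ taken nonnegative, only guarantees $|(D_X)_{ii}|\in\{0,1\}$ (since $X^*X=W D_X^*D_XW^*$ is a projection), so the diagonal entries a priori lie in $\{-1,0,1\}$ with exactly $k$ nonzero ones, and a $-1$ cannot in general be rotated away without destroying the nonnegativity of $D_F$. Your swapping argument must therefore run over $\{-1,0,1\}^q$ rather than $\{0,1\}^q$. A $-1$ at a position where $s_i(F)>0$ is excluded by minimality, since $(s+1)^2>(s-1)^2$ for $s>0$; but when $k>\rank(F)$ the minimizers genuinely do carry entries $-1$ at positions where $s_i(F)=0$, and these signs must be pushed into the arbitrary blocks $R_1,R_2$ of Remark \ref{unitaries SVD} --- exactly the diagonal sign matrix $R$ the paper constructs in the last case of its Step 4. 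With that repair your argument closes and coincides with the paper's.
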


It is not difficult to construct examples which show that there are other minimizers when  $\Phi$ is not strictly convex. 

\begin{exa}
Given $a>b>1$, consider the matrices
$$
F=\begin{bmatrix}a&0\\0&b\end{bmatrix},\quad U_1=\begin{bmatrix}1&0\\0&0\end{bmatrix},\peso{and} X=\begin{bmatrix}0&0\\0&1\end{bmatrix}.
$$
Take the Schatten norm for $p=1$, i.e.  $\| A \|_1=s_1(A) + s_2(A)$, $A \in \cM_2$. If we look for the closest partial isometry of rank one, then
$$
\| F- U_1 \|_1=a+ b-1 =\| F- X\|_1.
$$
From the assumption on the numbers $a$, $b$, it follows that all the possible unitaries associated to the SVDs of $F$ are given by  $V=W=\text{diag}(\lambda_1,\lambda_2)$, $|\lambda_1|=|\lambda_2|=1$. 
Therefore, $X$ is a minimizer which does not have the  form described in Theorem \ref{teo 1 sub 1 matrix}.
\end{exa}

The expression of the minimizers can be further simplified according to the following cases.

\begin{teo}[Parametrization of minimizers]\label{parametrization of min}
Let $1\leq k \leq q$, and let $F \in \cM_{m,n}$ be of rank $r$. Suppose that   $\Phi$ is a strictly convex symmetric gauge function.
Then the minimizers of problem  \eqref{min rank lower} satisfy the following:  
\begin{enumerate}
\item[i)] If $k<r$, then there is a unique minimizer if and only if $s_k(F)\neq s_{k+1}(F)$. In the case in which $s_k(F)= s_{k+1}(F)$, there are infinitely many minimizers given as follows. Set 
$$
\ell_k=\#\{j:\ s_j(F)< s_k(F) \}, \, \, \, \, \, \, \,  e_k=\# \{ \, j  \, : \, s_j=s_k \, \} .
$$ 
Given  $F=V\Sigma W^*$ an SVD,   the minimizers are parametrized by
$$
U_{k,P}=V\begin{bmatrix}  I_{\ell_k} & 0 & 0 \\ 0  & P  & 0 \\ 0 & 0 & \hspace{0.5cm}0_{m-\ell_k - e_k,n-\ell_k - e_k}  \end{bmatrix}W^*,
$$
where   $P$ is an orthogonal projection in $\cM_{e_k}$ of rank $k-\ell_k$.
\item[ii)] If $k=r$  and $F=U|F|$ is the canonical polar decomposition, then $U$
 is the unique minimizer.
\item[iii)] If $r<k\leq q$, then there are infinitely many minimizers.  Given  $F=V\Sigma W^*$ an SVD,  the minimizers are described by
$$
U_{k,S}=V \begin{bmatrix}   I_r & 0  \\   0 & S      \end{bmatrix} W^* ,
$$
where $S$ is a partial isometry of rank $k-r$.
\end{enumerate}
\end{teo}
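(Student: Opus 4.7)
The plan is to combine Theorem \ref{teo 1 sub 1 matrix} with the description of SVD freedom in Remark \ref{unitaries SVD}. By Theorem \ref{teo 1 sub 1 matrix} every minimizer has the form $V\begin{bmatrix}I_k&0\\0&0_{m-k,n-k}\end{bmatrix}W^*$ for some SVD $F=V\Sigma W^*$, and by Remark \ref{unitaries SVD} every other SVD is obtained by replacing $(V,W)$ with $\bigl(V\begin{bmatrix}D&0\\0&R_1\end{bmatrix},\,W\begin{bmatrix}D&0\\0&R_2\end{bmatrix}\bigr)$, where $D\in\cU_r$ is block diagonal with respect to the eigenspaces of $\Sigma_r$ and $R_1\in\cU_{m-r}$, $R_2\in\cU_{n-r}$ are arbitrary. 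I would substitute these into the formula and split the analysis by how the top-left $k$-block interacts with the top-left $r$-block.

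For case (ii) ($k=r$) the $I_r$ exactly fills the slot on which $D$ acts, so the product collapses to $V\begin{bmatrix}DD^*&0\\0&0\end{bmatrix}W^*=V\begin{bmatrix}I_r&0\\0&0\end{bmatrix}W^*$, independently of $D,R_1,R_2$; by \eqref{canon part iso} this equals the canonical partial isometry and the minimizer is unique. For case (iii) ($r<k\leq q$) the block $I_k$ extends past the $r$-block; partitioning accordingly produces $V\begin{bmatrix}I_r&0\\0&S\end{bmatrix}W^*$ with $S=R_1\begin{bmatrix}I_{k-r}&0\\0&0\end{bmatrix}R_2^*\in\cM_{m-r,n-r}$. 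Since $S$ is obtained from a rank-$(k-r)$ partial isometry via left and right unitary multiplication, it is itself a partial isometry of rank $k-r$; conversely any such $S$ is realized by taking $R_1,R_2$ from an SVD of $S$, so this exhausts the claimed family.

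For case (i) ($k<r$) the $R_1,R_2$ factors are absorbed into surrounding zero blocks and the product reduces to $V\begin{bmatrix}Q&0\\0&0_{m-r,n-r}\end{bmatrix}W^*$ with $Q=D(I_k\oplus 0_{r-k})D^*\in\cM_r$. Since $D$ commutes with $\Sigma_r$, so does $Q$; hence $Q$ is a rank-$k$ projection that splits as an orthogonal sum of projections on the eigenspaces of $\Sigma_r$. If $s_k(F)>s_{k+1}(F)$, the positions $1,\dots,k$ of $\Sigma_r$ form a union of complete eigenspaces, the rank of $Q$ on each eigenspace is forced, and one concludes $Q=I_k\oplus 0_{r-k}$, yielding uniqueness. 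If $s_k(F)=s_{k+1}(F)$, the $k$-th position lies strictly inside an eigenspace of dimension $e_k$; the projections on the other eigenspaces are forced (identity on those with larger singular value, zero on those with smaller or zero singular value), while on the middle eigenspace $D$ ranges over all of $\cU_{e_k}$, producing an arbitrary projection $P$ of the prescribed rank. Reassembling gives the stated parametrization.

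The delicate point will be the bookkeeping in case (i): verifying exhaustiveness amounts to noting that any two projections of the same rank inside $\cU_{e_k}$ are unitarily conjugate, making the middle-block conjugation map surjective onto all projections of the required rank; aligning the resulting block structure with the sizes $\ell_k$ and $e_k$ then gives the stated formula.
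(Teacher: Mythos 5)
Your proposal is correct and follows essentially the same route as the paper: both reduce to the form forced by Theorem \ref{teo 1 sub 1 matrix}, use Remark \ref{unitaries SVD} to write the SVD freedom as $V\,\mathrm{diag}(D,R_1)$, $W\,\mathrm{diag}(D,R_2)$ with $D$ block diagonal over the eigenspaces of $\Sigma_r$, and then compute the product case by case, with the middle-block conjugation sweeping out all rank-$(k-\ell_k)$ projections in case (i) and $R_1(I_{k-r}\oplus 0)R_2^*$ sweeping out all rank-$(k-r)$ partial isometries in case (iii). (Incidentally, your reading of $\ell_k$ as the number of indices $j$ with $s_j(F)>s_k(F)$ is the one consistent with the block sizes; the paper's displayed definition with ``$<$'' appears to be a typo.)
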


\begin{rem}\label{canonical of best rank app}
Let $F=V\Sigma W^*$ be an SVD, and suppose that $1 \leq k \leq r$. Define the orthogonal projection
$$
P_k=W \begin{bmatrix} I_k & 0 \\ 0 & 0 \end{bmatrix}W^*.
$$
Noting that the non-zero singular values of $FP_k$ are $s_1(F), \ldots, s_k(F)$, we see that an SVD is given by
$$
FP_k=V \begin{bmatrix} diag(s_1(F), \ldots, s_k(F)) & 0 \\ 0 & 0_{m-k,n-k} \end{bmatrix} W^*.
$$
If $FP_k=U_0|FP_k|$ is the canonical polar decomposition,  then from the   expression in (\ref{canon part iso}), we find that 
$U_0=U_k.$ As a consequence of our results, and following the terminology in \cite{CFP03},  any solution to the problem of finding the closest partial isometry with a given lower or equal rank can be constructed by a lift-and-project method. First, we lift the problem to the space $\cM_{m,n}$ and find a closest matrix of  lower or equal rank given by $FP_k$. Then, we project to the set of all partial isometries by taking the partial isometry in the canonical polar decomposition.

In the case where $r<k\leq q$, we cannot find the minimizers using a lift-and-project method. Indeed,
\begin{equation}\nonumber
 \inf\{ \,  \|F-G\|_\Phi  \, : \, G \in \cM_{m,n}\, , \, \rank(G) = k  \, \}
\end{equation} 
is attained if and only if $\rank(F)\leq k$ (see \cite[Thm. 3]{M60}). However, the set of minimizers $\{ \, U_{k,S} \, : \, S \in \cI_{m-r,n-r}^{k-r} \, \}$ has another characterization. It consists of all the partial isometries $X$ of rank $k$ satisfying $F=X|F|$ (see Remark \ref{cdp uniq}).
\end{rem}

\bigskip

As a consequence  of the above results we consider the global best approximation problem: given $F \in \cM_{m,n}$, we seek for the minimizers of 
\begin{equation}\label{eq global 1}
d_\Phi(F,\cI_{m,n})=\min \{ \, \| F- X \|_\Phi \, : \,X \in \cI_{m,n}  \, \}.
\end{equation}
Using that the sets $\cI_{m,n}^k$, $k=1, \ldots, q$, are the connected components of $\cI_{m,n}$,   we see that 
$$
d_\Phi(F,\cI_{m,n})=\min_{1 \leq k \leq r} d_\Phi(F,\cI_{m,n}^k), 
$$ 
where  $r=\rank(F)$. Notice that we have ruled out the components given by $r < k \leq  q$. This follows from  Remark \ref{formulas d}, which implies  that $d_\Phi(F,\cI_{m,n}^r)\leq d_\Phi(F,\cI_{m,n}^k)$ for $k=r+1, \ldots, q$. Then, global minimizers can be obtained from the connected components $\cI_{m,n}^k$, $k=1, \ldots, r$. Thus, there are also characterized as the canonical partial isometry associated to a best lower or equal rank approximation. The connected component where each minimizer lies is determined by the singular values of the matrix $F$. We will omit the proof of our next result, which now follows from these remarks, Theorems \ref{teo 1 matrix} and  \ref{teo 1 sub 1 matrix}.

\begin{teo}[Global minimizers]\label{teo 3 matrix}
Let $F \in \cM_{m,n}$ be of rank $r$.
\begin{enumerate}
\item[a)] Suppose that $s_i(F)\neq 1/2$ for all $i=1, \ldots, r$. 
\begin{enumerate}
\item[i)] If $s_r(F)>1/2$ and $F=U|F|$ is the canonical partial decomposition, then $U$ is a minimizer of \eqref{eq global 1}.

More generally, if $s_{k}(F)>1/2 >s_{k+1}(F)$ for some $k=1, \ldots, r-1$, then the closest partial isometry from $\cI_{m,n}^k$ to $F$ is a minimizer of \eqref{eq global 1}. 
\item[ii)]  If $s_1(F) <1/2$, then the closest partial isometries from $\cI_{m,n}^1$ to $F$ are minimizers of \eqref{eq global 1}. 
\end{enumerate}
\item[b)] Suppose that 
 $\{  j : \, s_j(F)=1/2  \}=\{ k, \ldots ,k +l \}$, where $1 \leq k \leq r$ and $0\leq l \leq r-k$. 
\begin{enumerate}
\item[i)] If $k=1$, then the closest partial isometries from $\,\cI_{m,n}^1,\,\ldots, \,\cI_{m,n}^{l+1}$ to $F$ are minimizers of \eqref{eq global 1}.
\item[ii)] If $1<k\leq r$, then the closest partial isometries from $\,\cI_{m,n}^{k-1},\,\ldots, \,\cI_{m,n}^{k+l+1}$ to $F$ are minimizers of \eqref{eq global 1}.
\end{enumerate} 
\end{enumerate}
Moreover, in all the cases  these are the only minimizers provided that the symmetric gauge function $\Phi$ is strictly convex.
\end{teo}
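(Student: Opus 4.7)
The plan is to reduce the infimum over $\cI_{m,n}$ to a finite comparison of the values $d_\Phi(F,\cI_{m,n}^k)$ for $k=1,\ldots,r$. As noted in the paragraph preceding the theorem, the third formula of Remark \ref{formulas d} dominates the second coordinatewise, so by monotonicity of $\Phi$ on nonnegative vectors the components with $k>r$ can be discarded.

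The heart of the argument is a one-step comparison. For $k+1\leq r$, passing from $k$ to $k+1$ alters exactly the $(k+1)$-th entry of the vector appearing in Remark \ref{formulas d}, replacing $s_{k+1}(F)$ by $|1-s_{k+1}(F)|$ and leaving all other entries fixed. Since every symmetric gauge function is monotone on $\R^q_{\geq 0}$, the resulting distance decreases, stays equal, or increases according to whether $s_{k+1}(F)$ is greater than, equal to, or less than $1/2$. A case analysis on the position of $1/2$ in the list $s_1(F)\geq\cdots\geq s_r(F)$ then yields precisely the four sub-cases of the statement: the optimal ranks are exactly those $k\geq 1$ that include every index $j$ with $s_j(F)>1/2$ and any subset of the indices with $s_j(F)=1/2$ (with $k=1$ used as a floor in case a.ii, since rank-zero matrices are excluded from $\cI_{m,n}$ by convention).

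Existence of minimizers in each optimal component then follows from Theorem \ref{teo 1 matrix}. For the uniqueness clause under strict convexity of $\Phi$, two ingredients are needed: within each optimal component, Theorem \ref{teo 1 sub 1 matrix} parametrizes all minimizers; between components, the comparison in the previous paragraph must be strict whenever $s_{k+1}(F)\neq 1/2$. This last point is the only technicality not explicit in the excerpt and is the main obstacle: one must upgrade monotonicity of $\Phi$ on $\R^q_{\geq 0}$ to strict monotonicity under strict convexity. A short argument suffices: if $0\leq x\leq y$ with $x\neq y$ and $\Phi(x)=\Phi(y)=1$, then $m=(x+y)/2$ satisfies $x\leq m$ coordinatewise, whence $1=\Phi(x)\leq\Phi(m)$ by monotonicity, while strict convexity forces $\Phi(m)<1$, a contradiction.
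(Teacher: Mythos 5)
Your overall strategy is the same as the paper's: the authors explicitly omit the proof of Theorem \ref{teo 3 matrix}, saying it follows from the reduction $d_\Phi(F,\cI_{m,n})=\min_{1\le k\le r}d_\Phi(F,\cI_{m,n}^k)$ (connected components plus Remark \ref{formulas d}) together with Theorems \ref{teo 1 matrix} and \ref{teo 1 sub 1 matrix}. Your one-entry comparison, replacing $s_{k+1}(F)$ by $|1-s_{k+1}(F)|$ when passing from rank $k$ to rank $k+1$, and your upgrade of monotonicity to strict monotonicity for strictly convex gauge functions via the midpoint $(x+y)/2$, are exactly the details the paper leaves implicit, and both are correct.

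However, you should not have asserted that your case analysis ``yields precisely the four sub-cases of the statement'': it does not, and the discrepancy matters. In case b)ii) your own comparison gives $d_\Phi(F,\cI_{m,n}^{k+l})<d_\Phi(F,\cI_{m,n}^{k+l+1})$ whenever $s_{k+l+1}(F)<1/2$ (and, when $k+l=r$, the component $\cI_{m,n}^{r+1}$ is already excluded by the paper's own reduction), so $\cI_{m,n}^{k+l+1}$ is \emph{not} an optimal component; the list should end at $\cI_{m,n}^{k+l}$, consistently with case b)i), whose list ends at $\cI_{m,n}^{l+1}=\cI_{m,n}^{k+l}$. Concretely, for $F=\mathrm{diag}(3/5,\,1/2,\,2/5)$ one has $k=2$, $l=0$, and for the Frobenius norm $d_2(F,\cI_{3,3}^1)^2=d_2(F,\cI_{3,3}^2)^2=57/100$ while $d_2(F,\cI_{3,3}^3)^2=77/100$, so the closest rank-three partial isometry is not a global minimizer even though b)ii) as printed includes $\cI_{3,3}^{k+l+1}=\cI_{3,3}^3$. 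The statement therefore contains an off-by-one error in b)ii), and a faithful write-up of your (correct) argument must either prove the corrected version or explicitly flag that it contradicts the printed claim, rather than declare agreement. Everything else --- the discarding of ranks above $r$, existence via Theorem \ref{teo 1 matrix}, and the two-level uniqueness argument (strict between-component inequalities plus the within-component description of Theorem \ref{teo 1 sub 1 matrix}) --- is sound.
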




\section{Symmetric approximation of finite frames}\label{sec frames}

Let $\cH$  be an $m$-dimensional Hilbert space.
 A family of vectors $\{  f_j \}_1^n$ spanning a subspace $\cK\subseteq \cH$  is called a
\textit{finite frame} for $\cK$. Equivalently, $\{  f_j \}_1^n$ is a finite frame for $\cK$ if there are constants $A,B>0$ such that,  for every $f \in \cK$, we have 
\[   A\|f\|^2 \leq \sum_{j =1}^n |\PI{f}{f_j}|^2  \leq B \| f\|^2 . \] 
 The optimal constants  $A$, $B$ where these inequalities hold for all $f \in \cK$ are called the lower and upper bounds for the frame. The  frame is a \textit{tight frame} if $A=B$ and a \textit{Parseval frame} if $A=B=1$.  A comprehensive introduction to frame theory in finite dimensional spaces and its applications  can be found in  \cite{CK12, [Chrbook]}. As in the Introduction, we will assume that 
$\cH=\C^m$. There is no loss of generality because the Frobenious norm is unitarily invariant and our results  only depend on the coordinates of vectors.

For the sake of simplicity,  finite frames (resp. Parseval frames) for subspaces of $\C^m$ consisting of $n$ vectors, counted with repetitions if it is necessary, will be called $n$-frames (resp. $n$-Parseval frames).

\begin{fed}
Let  $\cJ$ be a family of $n$-Parseval frames. Given a frame $\{ f_j\}_1^n$ for a  subspace $\cK \subseteq \C^m$,  a Parseval frame $\{ u_j \}_1^n \in \cJ$  is called a \textit{symmetric approximation of $\{ f_j\}_1^n$ in $\cJ$} if 
$$
\sum_{j=1}^n \| f_j - u_j\| ^2 \leq \sum_{j=1}^n \| f_j - x_j \|^2
$$
for all Parseval frames $\{ x_j \}_1^n \in \cJ$.
\end{fed}

\begin{rem}\label{weakly e}
Let $\cJ_w$ be the family of all  $n$-Parseval frames weakly similar to a given frame  $\{ f_j\}_1^n$.
 Then, 
 a Parseval frame $\{ u_j \}_1^n \in \cJ_w$ is a symmetric approximation of $\{ f_j\}_1^n$ in $\cJ_w$ exactly when $\{ u_j \}_1^n$ is a symmetric approximation of $\{ f_j\}_1^n$ in the sense of \cite{FPT02}.
\end{rem}

\bigskip

In the Introduction, we have explained the connection of approximation by partial isometries and symmetric approximation of frames. The first family of $n$-Parseval frames that we shall consider to study symmetric approximations is determined by the rank of the partial isometries.
In the language of frames,  this  corresponds to the notion of redundancy or excess of a frame. To recall this notion, we write $I_n$ for the set $\{ \, 1, \ldots , n\}$. The \textit{excess} of a frame $\{ f_j \}_1^n$ for a subspace $\cK$ is 
\[   e(\{ f_j \}_1^n)=\max \{ \, |I|  \, : \, I\subseteq I_n, \, \text{span}\{ f_j \}_{j \in I_n\setminus I} = \cK\, \}=n - \dim \cK.  \]
It describes the greatest number of vectors which can be removed from $\{ f_j \}_1^n$ with the property that the remaining vectors
still generate the same subspace. It is easily verified that $e(\{ f_j \}_1^n)=\dim \ker(F)$, where $F$ is the synthesis matrix of $\{ f_j \}_1^n$. Recall that $F$ is the matrix whose columns are the vectors $f_1, \ldots, f_n$.
In particular, this implies $n=e(\{ f_j \}_1^n)+\rank(F)$, and consequently, $n-q\leq e(\{ f_j \}_1^n)\leq n-1$, $q=\min\{m,\,n\}$.
We  introduce the following  sets of $n$-Parseval frames: for $k=1, \ldots, q$, 
$$
\cJ_{n-k}=\{ \, \{ w_j\}_1^n  \, : \, \{ w_j\}_1^n \text{ is a Parseval frame},  \,e(\{ w_j \}_1^n)=n-k \, \}.
$$
Theorems \ref{teo 1 matrix}, \ref{teo 1 sub 1 matrix} and \ref{parametrization of min} can be rephrased as results providing all the symmetric approximations in $\cJ_{n-k}$.
For the uniqueness part, we recall that the Frobenious norm is given by a strictly convex  symmetric gauge function. 

\begin{cor}\label{fixed ex}
Let $\{ f_j\}_1^n$ be a frame for a subspace $\cK \subseteq \C^m$. Suppose that this frame has synthesis matrix $F$ of rank $r$.  Then every symmetric approximation of $\{ f_j\}_1^n$ in $\cJ_{n-k}$, $k=1, \ldots, q$, is given by
$$
u_j=V\begin{bmatrix} I_k & 0 \\ 0 & 0_{m-k,n-k} \end{bmatrix}W^* e_j, \, \, \, \, j=1, \ldots, n, 
$$
where $V$ and $W$ are any pair of unitary matrices such that $F=V\Sigma W^*$ is an SVD.
In particular, there is a unique symmetric approximation  if and only if $s_k(F)\neq s_{k+1}(F)$ when $k<r$; the canonical Parseval frame is the unique symmetric approximation  when $k=r$; and there are infinitely many symmetric approximations  when $r<k\leq q$. 
\end{cor}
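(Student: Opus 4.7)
The plan is to reduce the statement to a direct application of Theorems \ref{teo 1 matrix}, \ref{teo 1 sub 1 matrix} and \ref{parametrization of min} via the dictionary between $n$-Parseval frames and partial isometries given by the synthesis map. First I would observe that a family $\{x_j\}_1^n$ of vectors in $\C^m$ is an $n$-Parseval frame for some subspace if and only if its synthesis matrix $X\in\cM_{m,n}$ (whose $j$-th column is $x_j$) is a partial isometry, and that the excess satisfies $e(\{x_j\}_1^n)=\dim\ker(X)=n-\rank(X)$. Therefore the condition $\{x_j\}_1^n\in\cJ_{n-k}$ is equivalent to $X\in\cI_{m,n}^k$, and the synthesis map induces a bijection $\cJ_{n-k}\longleftrightarrow\cI_{m,n}^k$.

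Next I would rewrite the error functional in matrix form: since the Frobenius norm is the unitarily invariant norm associated to the strictly convex symmetric gauge function $\Phi_2=\|\cdot\|_{\ell^2}$, and since columns of $F-X$ are exactly $f_j-x_j$, we have
$$
\sum_{j=1}^n \|f_j-x_j\|^2=\|F-X\|_2^2.
$$
Thus $\{u_j\}_1^n\in\cJ_{n-k}$ is a symmetric approximation of $\{f_j\}_1^n$ in $\cJ_{n-k}$ if and only if its synthesis matrix $U$ minimizes $\|F-X\|_2$ over $X\in\cI_{m,n}^k$. Theorem \ref{teo 1 matrix} then gives existence and the form
$$
U=V\begin{bmatrix} I_k & 0\\ 0 & 0_{m-k,n-k}\end{bmatrix}W^*
$$
for some SVD $F=V\Sigma W^*$; reading this equation column by column yields $u_j=Ue_j$ as in the statement.

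For the uniqueness assertions, since $\Phi_2$ is strictly convex, I would invoke Theorem \ref{teo 1 sub 1 matrix} to conclude that \emph{every} minimizer has the form above for \emph{some} SVD of $F$, and then appeal to Theorem \ref{parametrization of min} to analyze the three cases: when $k<r$, part i) shows the minimizer is unique precisely when $s_k(F)\neq s_{k+1}(F)$ (uniqueness failing via the projection $P$ when equality holds); when $k=r$, part ii) identifies the unique minimizer with the canonical partial isometry of $F=U|F|$, whose columns are by definition the canonical Parseval frame; when $r<k\leq q$, part iii) exhibits the infinite family parametrized by $S\in\cI_{m-r,n-r}^{k-r}$, yielding infinitely many symmetric approximations.

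There is essentially no genuine obstacle in the argument, only bookkeeping: the main care point is verifying that the bijection between $\cJ_{n-k}$ and $\cI_{m,n}^k$ is well defined (one must note that the subspace spanned by a Parseval frame is not fixed a priori, so $\cJ_{n-k}$ really ranges over all Parseval frames of excess $n-k$ regardless of ambient subspace, matching the full connected component $\cI_{m,n}^k$), and that the Frobenius norm comes from a strictly convex symmetric gauge function so the uniqueness statements of the preceding theorems apply without modification.
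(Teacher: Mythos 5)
Your proposal is correct and matches the paper's (implicit) argument exactly: the paper presents this corollary as a direct rephrasing of Theorems \ref{teo 1 matrix}, \ref{teo 1 sub 1 matrix} and \ref{parametrization of min} via the same dictionary — synthesis matrices of $n$-Parseval frames of excess $n-k$ are precisely the partial isometries in $\cI_{m,n}^k$, the error functional is $\|F-X\|_2^2$, and the Frobenius norm comes from a strictly convex symmetric gauge function. Your bookkeeping of the bijection and of the three uniqueness cases is exactly what the paper relies on.
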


\begin{rem}
Let $\{ f_j\}_1^n$ and $\{ g_j\}_1^n$ be frames for subspaces of $\C^m$ whose synthesis matrices are $F$ and $G$, respectively. 
Notice that these frames are weakly equivalent if and only if $\ker(F)=\ker(G)$. Then any pair of weakly equivalent frames have the same excess. This means that we have the inclusion $\cJ_w \subseteq \cJ_{n-r}$. Thus,  Corollary \ref{fixed ex} generalizes 
\cite[Thm. 1.3]{FPT02}. The fact that the canonical Parseval frame is the unique symmetric approximation in the family  $\cJ_{n-r}$ was also proved in \cite[Corol. 3.7]{H2} and \cite[Thm. 2.2]{LZ06}. 
\end{rem}

 We give below a somewhat simplified version of Theorem \ref{teo 3 matrix} for  frames. The approximation of a matrix by a partial isometry without rank constraints corresponds to the symmetric approximation of a frame in the family of all $n$-Parseval frames. We will only exhibit one symmetric approximation from each connected component of the partial isometries. 

\begin{cor}\label{frames global}
Let $\{ f_j\}_1^n$ be a frame for a subspace $\cK \subseteq \C^m$ with synthesis matrix $F$. Set  $k= \# \{ \, j \, : s_j(F) \geq 1/2 \, \}$, and put $k=1$ if $s_j(F)<1/2$ for all $j\geq 1$.  Then a symmetric approximation of $\{ f_j\}_1^n$ in the family of all $n$-Parseval frames is given by
$$
u_j=V\begin{bmatrix} I_k & 0 \\ 0 & 0_{m-k,n-k} \end{bmatrix}W^* e_j, \, \, \, \, j=1, \ldots, n, 
$$
where $V$ and $W$ are any pair of unitary matrices such that $F=V\Sigma W^*$ is an SVD. 
This is the unique symmetric approximation in the following two cases: $(i)$ there is at least one singular value satisfying $s_j(F)\geq 1/2$ and $\# \{ \, j \, : s_j(F) = 1/2 \, \} \leq 1$; or
$(ii)$ $s_j(F)<1/2$ for all $j\geq 1$ and $s_1(F)\neq s_2(F)$.
\end{cor}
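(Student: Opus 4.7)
The plan is to reduce the statement to Theorem \ref{teo 3 matrix} by passing to the synthesis matrix, using the identification between $n$-Parseval frames and partial isometries in $\cI_{m,n}$ that was laid out in the Introduction. The first observation is that for any $n$-tuple $\{x_j\}_1^n$ in $\C^m$ with synthesis matrix $X$, one has
$$
\sum_{j=1}^n \|f_j-x_j\|^2 = \|F-X\|_2^{\,2},
$$
where $\|\cdot\|_2$ is the Frobenius norm. Since an $n$-Parseval frame is exactly an $n$-tuple whose synthesis matrix lies in $\cI_{m,n}$, minimizing over all $n$-Parseval frames is equivalent to the global matrix problem \eqref{eq global 1} for the Frobenius norm, which is induced by the strictly convex symmetric gauge function $\Phi(x)=\|x\|_2$ on $\R^q$.

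Second, I would translate the corollary's definition $k=\#\{j:s_j(F)\geq 1/2\}$ (with the convention $k=1$ when all singular values are below $1/2$) into the case structure of Theorem \ref{teo 3 matrix}. If no singular value equals $1/2$ and at least one exceeds $1/2$, then either $s_r(F)>1/2$ (so $k=r$, case a)i)), or $s_k(F)>1/2>s_{k+1}(F)$ (case a)i) with a strict gap), in both situations Theorem \ref{teo 3 matrix} asserts that a closest partial isometry from $\cI_{m,n}^k$ is a global minimizer. If all singular values are below $1/2$, then $k=1$ falls into case a)ii). If some singular values equal $1/2$, the set $\{j:s_j(F)=1/2\}$ has the form $\{k_0,\dots,k_0+l\}$ and then $k=k_0+l$, and the component $\cI_{m,n}^k$ appears among the minimizing components listed in case b).

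Third, in every case Theorem \ref{teo 1 matrix} identifies the optimal partial isometry in $\cI_{m,n}^k$ as
$$
U_k = V \begin{bmatrix} I_k & 0 \\ 0 & 0_{m-k,n-k}\end{bmatrix} W^{*},
$$
for any SVD $F=V\Sigma W^{*}$. Reading off the columns $u_j=U_ke_j$ produces the formula announced in the corollary.

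For uniqueness I would invoke Theorems \ref{teo 1 sub 1 matrix} and \ref{parametrization of min} together with the uniqueness clause of Theorem \ref{teo 3 matrix}. Under condition (i) of the corollary, a single connected component $\cI_{m,n}^k$ contributes to the minimum, and inside it Theorem \ref{parametrization of min}\,i) applies because the hypothesis $s_k(F)\neq s_{k+1}(F)$ is forced either by the strict inequality $s_k(F)>1/2\geq s_{k+1}(F)$ or, when $s_k(F)=1/2$, by the assumption that at most one singular value equals $1/2$; the case $k=r$ is handled by Theorem \ref{parametrization of min}\,ii). Under condition (ii), only $\cI_{m,n}^1$ contributes and $s_1(F)\neq s_2(F)$ is exactly the uniqueness hypothesis of Theorem \ref{parametrization of min}\,i) at rank one. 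The main obstacle I anticipate is the bookkeeping needed to match the two case splittings; in particular, verifying that the ``at most one $s_j(F)=1/2$'' condition indeed isolates a single component among those listed in part b) of Theorem \ref{teo 3 matrix} (and that the component selected coincides with $\cI_{m,n}^k$ for $k=\#\{j:s_j(F)\geq 1/2\}$) is the only step that is not a routine transcription between frames and their synthesis matrices.
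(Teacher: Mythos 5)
Your reduction to synthesis matrices, the identity $\sum_{j=1}^n\|f_j-x_j\|^2=\|F-X\|_2^2$, the matching of $k=\#\{j:\,s_j(F)\geq 1/2\}$ with the case structure of Theorem \ref{teo 3 matrix}, and the use of Theorem \ref{teo 1 matrix} to read off the columns of $U_k$ are exactly the intended route: the paper gives no separate proof of this corollary, presenting it as a direct translation of Theorem \ref{teo 3 matrix} for the Frobenius norm (which is indeed given by a strictly convex symmetric gauge function). The existence part of your argument is complete and correct.

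The uniqueness argument under condition $(i)$, however, has a genuine gap at precisely the step you flagged as ``not a routine transcription.'' You assert that under $(i)$ a single connected component $\cI_{m,n}^k$ contributes to the minimum, but this fails when exactly one singular value equals $1/2$ and it is preceded by strictly larger ones. Indeed, if $s_k(F)=1/2$ with $k\geq 2$ and $s_{k-1}(F)>1/2$, then $d_2(F,\cI_{m,n}^{k})^2-d_2(F,\cI_{m,n}^{k-1})^2=(s_k(F)-1)^2-s_k(F)^2=1-2s_k(F)=0$, so both components attain the global minimum and produce minimizers of different ranks; this is also what Theorem \ref{teo 3 matrix} b)ii) (with $l=0$) records, since it lists several minimizing components. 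Concretely, for $F=\mathrm{diag}(1,1/2)$ one has $k=2$, and both $U_2=I$ and $U_1=\mathrm{diag}(1,0)$ lie at Frobenius distance $1/2$ from $F$, so the symmetric approximation is not unique even though condition $(i)$ holds. The verification you postponed therefore cannot be completed as stated: it goes through only in the sub-cases of $(i)$ where no singular value equals $1/2$, or where the unique singular value equal to $1/2$ is $s_1(F)$; otherwise the hypothesis $(i)$ itself would have to be strengthened. Condition $(ii)$ and the remaining sub-cases of $(i)$ go through exactly as you describe, via Theorem \ref{parametrization of min} i) and ii).
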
 

Note that the above symmetric approximation is the canonical Parseval frame associated to $\{ f_j\}_1^n$ when $k=\rank(F)$, and thus, it spans the subspace $\cK$.  In general, a symmetric approximation in the family of all $n$-Parseval frames   spans a subspace contained in $\cK$. Using this observation we can now find a symmetric approximation in the family $\cJ_\cS$ of all the $n$-Parseval frames whose vectors belong to a fixed subspace $\cS\subseteq \C^m$.  In the following result, we denote by $P_\cS$ the orthogonal projection onto $\cS$. 



\begin{teo}\label{fix the subspace}
Let $\{ f_j\}_1^n$ be a frame for a subspace $\cK \subseteq \C^m$ with synthesis matrix $F$.
Fix a subspace $\cS \subseteq \cH$ such that $\cK$ is not included  in  $\cS^\perp$. Set  $k= \# \{ \, j \, : s_j(P_\cS F) \geq 1/2 \, \}$, and put $k=1$ if $s_j(P_\cS F)<1/2$ for all $j\geq 1$.  Then a symmetric approximation of $\{ f_j\}_1^n$ in $\cJ_\cS$ is given by
$$
u_j=V\begin{bmatrix} I_k & 0 \\ 0 & 0_{m-k,n-k} \end{bmatrix}W^* e_j, \, \, \, \, j=1, \ldots, n, 
$$
where $V$ and $W$ are any pair of unitary matrices such that $P_\cS F=V\Sigma W^*$ is an SVD. 

\end{teo}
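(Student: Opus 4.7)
The plan is to reduce the constrained minimization over $\cJ_\cS$ to the unconstrained best approximation problem of Corollary \ref{frames global}, applied to the compressed matrix $P_\cS F$.

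First I would translate the definition of $\cJ_\cS$ into matrix language. A Parseval frame $\{u_j\}_1^n$ whose vectors all lie in $\cS$ has synthesis matrix $U$ that is a partial isometry satisfying $P_\cS U = U$ (equivalently $\ran(U)\subseteq \cS$); conversely every such partial isometry gives a Parseval frame in $\cJ_\cS$. So the task is to minimize $\|F - U\|_2$ over all partial isometries $U$ with $\ran(U)\subseteq \cS$.

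Next I would split off the component of $F$ orthogonal to $\cS$. Because $P_\cS U = U$, the matrices $P_\cS F - U$ and $P_{\cS^\perp} F$ are orthogonal with respect to the Frobenius inner product, giving
\[
\|F - U\|_2^2 \,=\, \|P_\cS F - U\|_2^2 \,+\, \|P_{\cS^\perp} F\|_2^2.
\]
The second term is independent of $U$, so the problem reduces to finding a partial isometry $U$ with $\ran(U)\subseteq \cS$ that minimizes $\|P_\cS F - U\|_2$.

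Now I would drop the range constraint momentarily and apply Corollary \ref{frames global} to $P_\cS F$: a global Frobenius best partial-isometry approximation of $P_\cS F$ is the matrix $U_k = V\bigl[\begin{smallmatrix} I_k & 0 \\ 0 & 0 \end{smallmatrix}\bigr]W^*$, with $k$ exactly the integer defined in the statement through the singular values of $P_\cS F$. It remains to check that this unconstrained minimizer automatically obeys $\ran(U_k)\subseteq \cS$. The hypothesis $\cK\not\subseteq \cS^\perp$ forces $P_\cS F\neq 0$, hence $r := \rank(P_\cS F)\geq 1$; since $s_j(P_\cS F)=0<1/2$ for $j>r$, we always have $k\leq r$. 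The first $r$ left singular vectors of $P_\cS F$ are eigenvectors of $P_\cS F F^* P_\cS$ with positive eigenvalue, so they lie in $\ran(P_\cS)=\cS$. Consequently $\ran(U_k)=\spans\{v_1,\dots,v_k\}\subseteq \cS$, and $U_k$ is feasible for the constrained problem; being optimal in the larger class of all partial isometries, it is also optimal in $\cJ_\cS$.

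The main obstacle is precisely this last step: one must verify that the unconstrained minimizer provided by Corollary \ref{frames global} automatically respects the subspace constraint $\ran(U)\subseteq \cS$. This is exactly why $k$ must be defined through the singular values of $P_\cS F$ rather than those of $F$, so that the truncation at rank $k$ retains only left singular directions lying inside $\cS$.
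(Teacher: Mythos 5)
Your proof is correct and follows essentially the same route as the paper: a Pythagorean splitting $\|F-X\|_2^2=\|P_\cS F-X\|_2^2+\|P_{\cS^\perp}F\|_2^2$ valid for any $X$ with range in $\cS$, followed by an application of the global approximation result to $P_\cS F$ and the observation that the resulting minimizer $U_k$ has range contained in $\ran(P_\cS F)\subseteq\cS$. The only difference is that you spell out the verification $k\leq\rank(P_\cS F)$ and the range containment, which the paper asserts in a single line.
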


\begin{rem}
$i)$ Clearly, the same uniqueness statement of Corollary \ref{frames global} holds replacing $F$ by $P_\cS F$.

$ii)$ If we have $\cK \subseteq \cS^\perp$, then $P_\cS F=0$. Then the above formula does not give a symmetric approximation in $\cJ_\cS$. However, in this case, it is easily seen that every  $n$-Parseval frame in $\cS$ is a symmetric approximation.  
\end{rem}

\section{Proofs}\label{proofs}

\subsection{Proof of Theorem \ref{teo 1 matrix}}

 We are going to use the following version of the Lidskii-Mirsky-Wielandt theorem stated for singular values (see \cite{M60}, \cite[Thm. IV.3.4]{B97} and for a proof for rectangular matrices \cite[Thm. 3.4.5]{HJ91}).

\begin{teo}\label{mirsky}
Let $F,G \in \cM_{m,n}$ and $q=\min\{  m, \, n \}$,  then
$$
(|s_1(F)-s_1(G)|, \ldots , |s_q(F)-s_q(G)| ) \,   \textstyle{\smayo_w}  \, s(F-G).
$$
\end{teo}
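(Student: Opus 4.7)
The strategy is to reduce the rectangular singular value statement to the classical Hermitian Lidskii-Mirsky-Wielandt theorem, which I invoke as a black box: for any pair of Hermitian matrices $A,B\in \cM_N$, the vector of eigenvalue differences (arranged in nonincreasing order) satisfies
$$
(\lambda_1(A)-\lambda_1(B),\ldots,\lambda_N(A)-\lambda_N(B))\ \smayo\ \lambda(A-B).
$$
The bridge between the Hermitian and rectangular settings is the standard $2\times 2$ block dilation
$$
\tilde F=\begin{bmatrix} 0 & F \\ F^* & 0\end{bmatrix}, \qquad \tilde G=\begin{bmatrix} 0 & G \\ G^* & 0\end{bmatrix}\in\cM_{m+n},
$$
which are Hermitian and satisfy $\tilde F-\tilde G=\widetilde{F-G}$.

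Starting from an SVD $F=V\Sigma W^*$, conjugating $\tilde F$ by the block unitary $V\oplus W$ reduces it to $\begin{bmatrix} 0 & \Sigma \\ \Sigma^* & 0\end{bmatrix}$, whose eigenvalues in nonincreasing order are
$$
s_1(F),\ldots,s_q(F),\ \underbrace{0,\ldots,0}_{|m-n|},\ -s_q(F),\ldots,-s_1(F).
$$
The same calculation applied to $F-G$ shows that the vector $|\lambda(\widetilde{F-G})|$ rearranged in nonincreasing order consists of each $s_i(F-G)$ repeated twice, followed by $|m-n|$ zeros. Applying the Hermitian Lidskii-Mirsky-Wielandt theorem to $\tilde F$ and $\tilde G$, and then passing to absolute values (using that $x\smayo y$ implies $|x|\smayo_w|y|$, which follows from the fact that $t\mapsto(|t|-\tau)^+$ is convex and nondecreasing in $|t|$ for every $\tau\ge 0$), I obtain
$$
(|\lambda_i(\tilde F)-\lambda_i(\tilde G)|)_{i=1}^{m+n}\ \smayo_w\ |\lambda(\widetilde{F-G})|.
$$

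To finish, I inspect the entries of the left-hand vector. For $i=1,\ldots,q$ the difference $\lambda_i(\tilde F)-\lambda_i(\tilde G)$ equals $s_i(F)-s_i(G)$, on the middle block it vanishes, and on the bottom $q$ indices it equals $-(s_{m+n+1-i}(F)-s_{m+n+1-i}(G))$. Hence, rearranged in nonincreasing order, the left-hand vector contains each $|s_i(F)-s_i(G)|^\downarrow$ exactly twice followed by zeros. Comparing the partial sums up to even indices $k=2j$ in the weak submajorization above yields
$$
2\sum_{i=1}^j |s_i(F)-s_i(G)|^\downarrow\ \le\ 2\sum_{i=1}^j s_i(F-G),\qquad j=1,\ldots,q,
$$
which is exactly the asserted relation $(|s_i(F)-s_i(G)|)_{i=1}^q\smayo_w s(F-G)$.

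The main obstacle is of course the Hermitian Lidskii-Mirsky-Wielandt theorem itself, which I treat as a black box; a self-contained proof via Lidskii's normal-cone argument, Wielandt's minimax principle on exterior powers, or a Schur-Horn type convexity argument would be strictly longer than the rest of the reduction. All the remaining steps — the block dilation, the identification of the eigenvalues of $\tilde F$ and $\widetilde{F-G}$, and the passage from majorization to weak submajorization of absolute values — are routine consequences of the SVD and elementary majorization theory, so the argument comes down to bookkeeping once the Hermitian inequality is in hand.
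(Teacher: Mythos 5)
Your argument is correct, and it is essentially the reduction the paper itself indicates: the paper cites the singular-value Lidskii--Mirsky--Wielandt theorem and then remarks (in Step 2 of the proof of Theorem \ref{teo 1 sub 1 matrix}) that it follows from the Hermitian version via the block dilation $\widehat{F}=\begin{bmatrix} 0 & F \\ F^* & 0\end{bmatrix}$, which is exactly the trick you carry out, with the bookkeeping on eigenvalues, absolute values, and even-index partial sums all in order.
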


\medskip

 Now we prove  Theorem \ref{teo 1 matrix}. Let $X$ be a partial isometry of rank $k$. 
Note that $s_j(X)=1$ for $j\leq k$ and $s_j(X)=0$ for $j>k$.
Using  Theorem \ref{mirsky}, we have the submajorization
\begin{equation}\nonumber
(|s_1(F)-1|, \ldots, |s_k(F)-1|, s_{k+1}(F),  \ldots, s_q(F)) \,   \textstyle{\smayo_w}  \, s(F-X).
\end{equation}
According to the Ky-Fan dominance principle (Theorem \ref{KF dominance}), this implies that 
\begin{equation}\label{ineq teo 1}
\Phi(|s_1(F)-1|, \ldots, |s_k(F)-1|, s_{k+1}(F),  \ldots, s_q(F)) \leq \| F- X \|_\Phi
\end{equation}
for every symmetric gauge function $\Phi$. 

On the other hand,  the partial isometry $U_k$  clearly satisfies $\rank(U_k)=k$. From its definition, we see that 
$$
F-U_k=V \left(\Sigma - \begin{bmatrix}  I_k   &   0 \\  0  & 0_{m-k,n-k} \end{bmatrix} \right)W^*.
$$
Therefore, 
\begin{equation}\label{lid eq}
s(F-U_k)= (|s_1(F)-1|, \ldots, |s_k(F)-1|, s_{k+1}(F),  \ldots, s_q(F))^{\downarrow}.
\end{equation}
This fact along with the inequality \eqref{ineq teo 1} gives
$$
\| F-U_k \|_\Phi \leq \| F- X\|_\Phi.
$$ 
Hence the partial isometry $U_k$ is a minimizer.  

\subsection{Proof of Theorem \ref{teo 1 sub 1 matrix}}

The proof is divided into four steps. 
The first step contains an uniqueness property of strictly convex symmetric gauge functions. In the second step, we give a lemma on the equality case in the Lidskii-Mirsky-Wielandt theorem
stated for singular values. In the  third step, we prove an optimization lemma for numbers. 
These results  are combined in the fourth step to prove Theorem \ref{teo 1 sub 1 matrix}.

\bigskip

\noi \textit{Step 1.} Strictly convex symmetric gauge functions have the following property. 

\begin{lem}\label{prop strict conv}
Let $x,y \in \R^q$, $x\geq 0$, $y\geq 0$. Suppose that $\Phi$ is a strictly convex symmetric norm,  $\Phi(x)=\Phi(y)$ and  $x \textstyle{\smayo_w} y$. Then $x=y$.
\end{lem}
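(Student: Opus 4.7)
The plan is to upgrade the weak submajorization $x\smayo_w y$ to a full majorization, and then invoke the Hardy--Littlewood--P\'olya/Birkhoff picture together with the strict convexity of $\Phi$. Throughout I would work with $x=x^\downarrow$ and $y=y^\downarrow$: since $\smayo_w$ is defined by decreasing rearrangements and $\Phi$ is symmetric, nothing is lost in this reduction (and the desired equality $x=y$ is to be read for these canonical representatives).

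The first step would be a completion: I would construct $y'$ with $0\le y'\le y$ coordinate-wise, still nonincreasing, and satisfying $x\smayo y'$ (equality of the total sum). This is done by subtracting the nonnegative defect $\sum_i(y_i-x_i)$ from the tail entries of $y$, bottom up; proceeding tail-first keeps the resulting vector nonincreasing and nonnegative. Then monotonicity of $\Phi$ on nonnegative vectors (a general property of every symmetric gauge function) yields $\Phi(y')\le\Phi(y)$, while $x\smayo y'$ combined with the Ky--Fan dominance principle (Theorem~\ref{KF dominance}) yields $\Phi(x)\le\Phi(y')$. Together with the hypothesis $\Phi(x)=\Phi(y)$, this forces $\Phi(x)=\Phi(y')=\Phi(y)$.

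Next I would prove $y'=y$. Assume not, and inspect the midpoint $z=(y+y')/2$, which satisfies $y'\le z\le y$ coordinate-wise and $z\ne y$. Since $\Phi(y)=\Phi(y')$ and $y'\ne y$, the vectors $y$ and $y'$ cannot lie on a common ray through the origin (proportionality $y'=ty$ with $t\in[0,1]$ together with equal $\Phi$-norm forces $t=1$). Strict convexity of $\Phi$ therefore gives $\Phi(z)<\Phi(y)$, whereas coordinate-wise monotonicity gives $\Phi(z)\ge\Phi(y')=\Phi(y)$, a contradiction. Hence $y'=y$, so in fact $x\smayo y$ is a full majorization.

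From here I would close the argument by Hardy--Littlewood--P\'olya: $x=Dy$ for some doubly stochastic $D$, which by Birkhoff's theorem can be written $D=\sum_\sigma\lambda_\sigma P_\sigma$, so $x=\sum_\sigma\lambda_\sigma P_\sigma y$ is a convex combination of permutations of $y$, each of $\Phi$-norm equal to $\Phi(y)=\Phi(x)$. An iterated application of strict convexity forces every $P_\sigma y$ with $\lambda_\sigma>0$ to coincide with $x$; in particular $x$ is a permutation of $y$, and since both are in nonincreasing order, $x=y$. I expect the main obstacle to be the intermediate claim $y'=y$: that is where the strict convexity hypothesis has to combine nontrivially with the (otherwise non-strict) monotonicity of symmetric gauges, and it is the step that genuinely goes beyond the classical Lidskii--Mirsky--Wielandt/Birkhoff framework.
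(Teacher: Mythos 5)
Your argument is correct, but it takes a genuinely different route from the paper's. The paper settles the lemma in one stroke by invoking the characterization (for $x,y\ge 0$) of $x\smayo_w y$ as membership of $x$ in the convex hull of the $2^qq!$ signed permutations of $y$ (\cite[Ex.~II.2.10]{B97}); since every signed permutation of $y$ has the same $\Phi$-norm as $y$, equality in the triangle inequality plus strict convexity forces $x$ to coincide with one of them, hence with $y$ after sorting. You instead split the work in two stages: a hands-on completion of the weak submajorization to a full majorization $x\smayo y'$ via bottom-up tail subtraction, followed by the observation --- the genuinely new ingredient of your version --- that strict convexity combined with the coordinatewise monotonicity of symmetric gauge functions forces $y'=y$, after which Hardy--Littlewood--P\'olya and Birkhoff close the argument in the classical way. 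Your route is longer but more self-contained: it replaces the signed-permutation convex-hull characterization (which already packages the completion and the Birkhoff decomposition into a single cited fact) by elementary constructions, at the price of needing monotonicity of $\Phi$ on nonnegative vectors as an extra, standard, input. The individual steps all check out: the tail-first subtraction does preserve the partial-sum inequalities and the nonincreasing order, and the midpoint argument for $y'=y$ works directly with the paper's definition of strict convexity (your aside about common rays is not needed, since that definition only asks the two unit vectors to be distinct). Both proofs share the same final caveat, namely that the conclusion $x=y$ is to be read for the decreasingly ordered representatives, which is exactly how the lemma is used in Step~4.
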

\begin{proof}
For vectors $x\geq 0$, $y\geq 0$, we have $x \textstyle{\smayo_w} y$ if and only if $x$ is in the convex hull of the $2^qq!$ vectors given by
$$
y_{j}:=(\eps_{j,1} x_{\sigma_j(1)}, \ldots, \eps_{j,q} x_{\sigma_j(q)}),
$$ 
where   $(\eps_{j,1}, \ldots, \eps_{j,q})\in\{-1,1\}^q$  and $\sigma_j$ is a permutation of $\{1,\ldots,q\}$ for every $j$ (see \cite[Ex. II.2.10]{B97}). Thus, we can write $x=\sum \lambda_j\, y_j$, where $\sum \lambda_j=1$ and $\lambda_j\geq 0$.
Now note that $\Phi(y_j)=\Phi(y)=\Phi(x)$ and 
$$
\Phi(x)=\Phi\left(\sum \lambda_j\, y_j\right)\leq \sum \lambda_j\, \Phi (y_j)
= \sum \lambda_j \, \Phi (x)=\Phi(x).
$$
This implies that $\Phi\left(\sum \lambda_j\, y_j\right) = \sum \lambda_j\, \Phi (y_j)$. 
Since the norm $\Phi$ is strictly convex and $\Phi(y_j)=\Phi(y)$ for all $j$, we get that $y_j=y$ whenever $\lambda_j>0$. Hence $x=y$.  
\end{proof}

\bigskip

\noi \textit{Step 2.} Let  $A \in \cM_n$ be a Hermitian matrix. The vector of eigenvalues of $A$ arranged in nonincreasing order is denoted by
$\lambda(A)=(\lambda_1(A), \ldots, \lambda_n(A))$. The  Lidskii-Mirsky-Wielandt theorem states:

\begin{teo}\label{first lid}
Let $A,B \in \cM_n$ be Hermitian matrices. Then 
$$
\lambda(A+B) - \lambda(A) \, \textstyle{\smayo}\, \lambda(B).
$$
\end{teo}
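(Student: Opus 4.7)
The statement is the classical Lidskii--Mirsky--Wielandt theorem for Hermitian matrices, so my plan is to outline one of the standard routes rather than propose something novel. The first observation is that the trace-equality part of the majorization is automatic: summing over all indices gives $\sum_{i=1}^n [\lambda_i(A+B) - \lambda_i(A)] = \Tr(A+B) - \Tr(A) = \Tr(B) = \sum_{i=1}^n \lambda_i(B)$. Hence the task reduces to establishing the submajorization $\lambda(A+B) - \lambda(A) \prec_w \lambda(B)$, namely that for every $k \in \{1, \dots, n\}$ the sum of the $k$ largest entries of $\lambda(A+B) - \lambda(A)$ is at most $\sum_{j=1}^k \lambda_j(B)$.

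The technical core is the Lidskii--Wielandt inequality: for every choice of indices $1 \le i_1 < \dots < i_k \le n$,
\begin{equation*}
\sum_{j=1}^k \lambda_{i_j}(A+B) \;\le\; \sum_{j=1}^k \lambda_{i_j}(A) + \sum_{j=1}^k \lambda_j(B). \qquad (\star)
\end{equation*}
Granting $(\star)$, the submajorization follows by a direct rearrangement. Let $\pi$ be the permutation of $\{1, \dots, n\}$ that sorts $\lambda(A+B) - \lambda(A)$ in nonincreasing order, and let $i_1 < \dots < i_k$ be the increasing rearrangement of $\{\pi(1), \dots, \pi(k)\}$. Then $\sum_{j=1}^k [\lambda(A+B) - \lambda(A)]^{\downarrow}_j = \sum_{j=1}^k [\lambda_{i_j}(A+B) - \lambda_{i_j}(A)]$, and $(\star)$ provides the required bound.

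To prove $(\star)$, my plan is to pass to the antisymmetric tensor power $\bigwedge^k \mathbb{C}^n$ and use the derivation operator associated to a Hermitian $X$, defined on decomposable tensors by $X_{(k)}(v_1 \wedge \cdots \wedge v_k) = \sum_{i=1}^k v_1 \wedge \cdots \wedge X v_i \wedge \cdots \wedge v_k$. This construction is linear in $X$, so $(A+B)_{(k)} = A_{(k)} + B_{(k)}$, and the eigenvalues of $X_{(k)}$ are precisely the sums $\lambda_{j_1}(X) + \dots + \lambda_{j_k}(X)$ indexed by strictly increasing multi-indices. Weyl's monotonicity $\lambda_1(Y+Z) \le \lambda_1(Y) + \lambda_1(Z)$ applied on $\bigwedge^k \mathbb{C}^n$ yields the special case of $(\star)$ with $i_j = j$; the general multi-index version follows from the min--max characterisation of higher eigenvalues of $(A+B)_{(k)}$, applied to the subspace of the exterior power spanned by eigenvectors associated with the chosen indices.

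The main obstacle is precisely this last step, the passage from the top eigenvalue to arbitrary indices inside $\bigwedge^k \mathbb{C}^n$: it requires matching up eigenspaces of $A$ and $A+B$ of compatible dimension in the exterior power. A fallback route is induction on $n$ based on Cauchy interlacing, with a perturbative treatment of eigenvalue ties and the general case following by continuity of eigenvalues in the matrix entries; alternatively, a direct subspace-flag construction in $\mathbb{C}^n$ that produces a single rank-$k$ projection $P$ with $\sum_j \lambda_{i_j}(A+B) - \sum_j \lambda_{i_j}(A) \le \Tr(BP)$, after which Ky Fan's maximum principle applied to $B$ finishes the argument.
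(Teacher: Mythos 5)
A preliminary remark on the comparison you were asked to survive: the paper does not prove Theorem \ref{first lid} at all. It is quoted as the classical Lidskii--Mirsky--Wielandt theorem, with pointers to \cite{B97} for three proofs and to \cite{LM99} for the shortest one. So your outline has to be judged against the standard literature arguments rather than against an in-paper proof.

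Your reduction is correct and complete: the trace identity settles the equality at $k=n$, and the passage from the Lidskii--Wielandt inequality $(\star)$ for arbitrary index sets $i_1<\cdots<i_k$ to the majorization, by sorting the coordinates of $\lambda(A+B)-\lambda(A)$ and reading off the corresponding index set, is fine. The genuine gap is in your proof of $(\star)$, which is the entire content of the theorem beyond the Ky Fan case $i_j=j$. The exterior-power argument only delivers that top case: Weyl's inequality on $\bigwedge^k\C^n$ controls $\lambda_1\bigl((A+B)_{(k)}\bigr)$, and to reach a general multi-index you would have to locate $\sum_j\lambda_{i_j}(A+B)$ at a definite position in the ordered spectrum of $(A+B)_{(k)}$ and match it with the position of $\sum_j\lambda_{i_j}(A)$ in the spectrum of $A_{(k)}$; those positions depend on the actual eigenvalues and do not coincide in general. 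Moreover, testing the variational principle on the decomposable eigenvector $u_{i_1}\wedge\cdots\wedge u_{i_k}$ of $A_{(k)}$ bounds the top eigenvalue of $(A+B)_{(k)}$ from \emph{below}, which is the wrong direction for $(\star)$. You flag this yourself as the main obstacle, but neither fallback is executed, and the first (Cauchy interlacing plus induction plus perturbation) is too vague to assess. The second fallback is the right idea and closes in a few lines, essentially as in \cite{LM99}: set $C=B-\lambda_k(B)I$, so that the positive part $C_+$ has at most $k-1$ nonzero eigenvalues; Weyl monotonicity gives $\lambda_i(A+C)\le\lambda_i(A+C_+)$ and $\lambda_i(A+C_+)\ge\lambda_i(A)$ for every $i$, hence
$$
\sum_{j=1}^k\bigl(\lambda_{i_j}(A+C)-\lambda_{i_j}(A)\bigr)\;\le\;\sum_{i=1}^n\bigl(\lambda_i(A+C_+)-\lambda_i(A)\bigr)\;=\;\Tr(C_+)\;\le\;\sum_{j=1}^k\bigl(\lambda_j(B)-\lambda_k(B)\bigr),
$$
and adding back $k\lambda_k(B)$ yields $(\star)$. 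Without some such completion your proposal establishes only the Ky Fan inequalities, not the theorem.
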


We refer the reader to the book by Bhatia \cite{B97} for three different proofs and historical information. 
However, the simplest proof  was given later  in \cite{LM99}. Recently, the same ideas of this last proof were used in \cite{MRS14} to investigate   the equality case in the Lidskii-Mirsky-Wielandt theorem. In particular, the following was proved:  

\begin{pro}\label{eq in Lid hermitian}
Let $A,B \in \cM_n$ be Hermitian matrices. Then $$\left(\lambda(B+A)-\lambda(A)\right)^{\downarrow}=\lambda(B)$$ implies $AB=BA$.
\end{pro}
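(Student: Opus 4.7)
My plan is to deduce commutativity by analyzing the equality case in the Wielandt refinement of Theorem \ref{first lid}, namely the inequality
\begin{equation}\label{wielandt-ineq}
\sum_{j=1}^k \lambda_{i_j}(A+B) \le \sum_{j=1}^k \lambda_{i_j}(A) + \sum_{j=1}^k \lambda_j(B),
\end{equation}
valid for all $1 \le i_1 < \cdots < i_k \le n$, which is known to imply the majorization $\lambda(A+B) - \lambda(A) \textstyle{\smayo} \lambda(B)$. The first step is to observe that the hypothesis $(\lambda(A+B)-\lambda(A))^{\downarrow} = \lambda(B)$ forces equality in \eqref{wielandt-ineq} for a well-chosen index set at every $k$. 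Indeed, for each $k$ pick $i_1 < \cdots < i_k$ so that the differences $\lambda_{i_j}(A+B) - \lambda_{i_j}(A)$ are the $k$ largest entries of the vector $\lambda(A+B) - \lambda(A)$. Then
$$\sum_{j=1}^k \bigl[\lambda_{i_j}(A+B) - \lambda_{i_j}(A)\bigr] = \sum_{j=1}^k \bigl(\lambda(A+B)-\lambda(A)\bigr)^{\downarrow}_j = \sum_{j=1}^k \lambda_j(B),$$
the last equality coming from the hypothesis; this saturates \eqref{wielandt-ineq}.

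The second step is the geometric analysis of the equality case, along the lines of the intersection argument used in \cite{LM99}. The Wielandt inequality \eqref{wielandt-ineq} is proved by producing, via a dimension count inside appropriate flags of eigenspaces of $A$ and $B$, a $k$-dimensional subspace $V$ satisfying $V \subseteq E_k$ and $\tr(P_V A P_V) \ge \sum_j \lambda_{i_j}(A)$, where $E_k$ is the span of $k$ top eigenvectors of $B$ and $P_V$ is the orthogonal projection onto $V$. Equality in \eqref{wielandt-ineq} saturates both bounds, and a spectral argument then shows that $V$ must in fact coincide with $E_k$, be $A$-invariant, and have $A$-spectrum equal to $\{\lambda_{i_j}(A)\}$. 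In particular, $E_k$ is invariant under both $A$ and $B$.

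In the third and final step I would run this analysis at each $k = 1, \ldots, n$ to obtain a full flag of subspaces simultaneously invariant under $A$ and $B$, and then extract from this flag an orthonormal basis diagonalizing both operators, whence $AB = BA$. I expect the principal obstacle to be the saturation step: upgrading a trace equality to genuine joint invariance is delicate when $A$ or $B$ has repeated eigenvalues, because one must verify that the extracted subspace lies inside the \emph{correct} eigenspace of $A$ rather than merely a subspace with the prescribed trace; furthermore, compatibility of the index sets across different values of $k$ is required to ensure that the subspaces actually nest into a flag. Handling these two points carefully is the technical contribution of the equality-case analysis in \cite{MRS14}.
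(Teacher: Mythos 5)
Your first step is sound: choosing, for each $k$, the indices at which $\lambda(A+B)-\lambda(A)$ attains its $k$ largest entries does saturate the Lidskii--Wielandt inequality, and that is the right entry point. But note that the paper itself does not prove this proposition --- it imports it from \cite{MRS14}, saying only that the argument there follows the ideas of the proof of Lidskii's theorem in \cite{LM99}. Your proposal ultimately does the same thing: the entire content of the proposition sits in your ``second step,'' which you describe rather than execute, and which you then explicitly delegate back to \cite{MRS14}. That makes the proposal a restatement of the problem rather than a proof.

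Moreover, the second step as described does not work, and it misattributes the method. If $E_k$ is the span of $k$ top eigenvectors of $B$, then a $k$-dimensional $V\subseteq E_k$ equals $E_k$ for dimension reasons alone, so ``a spectral argument shows $V=E_k$'' carries no information; and a saturated trace bound $\tr(P_VAP_V)=\sum_j\lambda_{i_j}(A)$ does not by itself force $V$ to be $A$-invariant unless one separately controls how the $\lambda_{i_j}(A)$ sit inside the spectrum of $A$ --- this is exactly the point you would need to prove. More importantly, the proof in \cite{LM99} that the paper points to is not a flag-intersection or minimax argument at all (that is the older route, as in \cite{B97}). Li and Mathias normalize $\lambda_k(B)=0$, use $B\leq B_+$ with $\tr(B_+)=\sum_{j=1}^k\lambda_j(B)$, and combine the monotonicity $\lambda_i(A+B)\leq\lambda_i(A+B_+)$, the trace identity $\sum_i\lambda_i(A+B_+)=\tr(A)+\tr(B_+)$, and Weyl's inequality $\lambda_i(A+B_+)\geq\lambda_i(A)$ on the complementary indices. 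The equality analysis of \cite{MRS14} consists in tracking when each of these elementary inequalities is saturated, which is where commutativity is extracted. So you cannot outsource your flag-saturation step to \cite{MRS14}: that reference proves the statement by a different mechanism. To complete your proposal you would have to carry out the minimax equality analysis yourself (including the repeated-eigenvalue and nesting issues you flag), or switch to the positive-part decomposition and analyze equality there.
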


Note that Theorem \ref{mirsky} can be deduced from Theorem \ref{first lid}.  The trick is to consider the Hermitian matrices
$$
\widehat{F}=\begin{bmatrix} 0 & F \\ F^* & 0 \end{bmatrix}, \, \, \, \, \, 
\widehat{G}=\begin{bmatrix} 0 & G \\ G^* & 0 \end{bmatrix}. 
$$
Using the same trick we get the following result.

\begin{lem}\label{rem lidsk eq sing val}
Let $F,G \in \cM_{m,n}$. Then 
$$
s(F-G)=| s(F) - s(G) |^{\downarrow}
$$
implies that $GF^*=FG^*$ and $G^*F=F^*G$. 
\end{lem}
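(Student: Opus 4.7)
The plan is to reduce the statement to the equality case of the Lidskii--Mirsky--Wielandt theorem for Hermitian matrices (Proposition \ref{eq in Lid hermitian}), by passing to the standard self-adjoint dilation
$$
\widehat{F}=\begin{bmatrix} 0 & F \\ F^* & 0 \end{bmatrix},\qquad \widehat{G}=\begin{bmatrix} 0 & G \\ G^* & 0 \end{bmatrix},
$$
which is exactly the trick mentioned just before the lemma in order to deduce Theorem \ref{mirsky} from Theorem \ref{first lid}. Clearly $\widehat{F}-\widehat{G}=\widehat{F-G}$, and the spectrum of any such dilation $\widehat{H}$ in nonincreasing order is
$$
\lambda(\widehat{H})=(s_1(H),\ldots,s_q(H),0,\ldots,0,-s_q(H),\ldots,-s_1(H)),
$$
with $|m-n|$ zeros in the middle. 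In particular, all three of $\lambda(\widehat{F})$, $\lambda(\widehat{G})$, $\lambda(\widehat{F-G})$ are symmetric about~$0$ with the same number of central zeros.

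My first step would be to convert the hypothesis on singular values into an eigenvalue identity for the dilations. The coordinates of $\lambda(\widehat{F})-\lambda(\widehat{G})$ are $s_i(F)-s_i(G)$ in the top $q$ slots, zeros in the middle, and $-(s_i(F)-s_i(G))$ (in the reverse order) in the bottom $q$ slots. Its decreasing rearrangement therefore has $|s_i(F)-s_i(G)|^{\downarrow}$ on top, zeros in the middle, and the negatives at the bottom. The vector $\lambda(\widehat{F-G})$ has the identical shape but with $s_i(F-G)$ in place of $|s_i(F)-s_i(G)|$. So the hypothesis $s(F-G)=|s(F)-s(G)|^{\downarrow}$ is precisely
$$
(\lambda(\widehat{F})-\lambda(\widehat{G}))^{\downarrow}=\lambda(\widehat{F}-\widehat{G}).
$$

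Next I would apply Proposition \ref{eq in Lid hermitian} with $A=\widehat{G}$ and $B=\widehat{F}-\widehat{G}$, so that $B+A=\widehat{F}$ and the identity just displayed is exactly its hypothesis. This gives $\widehat{G}(\widehat{F}-\widehat{G})=(\widehat{F}-\widehat{G})\widehat{G}$, equivalently $\widehat{F}\widehat{G}=\widehat{G}\widehat{F}$. Finally, a direct block multiplication yields
$$
\widehat{F}\widehat{G}=\begin{bmatrix}FG^*&0\\0&F^*G\end{bmatrix},\qquad \widehat{G}\widehat{F}=\begin{bmatrix}GF^*&0\\0&G^*F\end{bmatrix},
$$
so the commutation of the dilations is exactly the pair of identities $FG^*=GF^*$ and $F^*G=G^*F$ asserted in the lemma.

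The main delicate point is the middle step: verifying that the decreasing rearrangement of $\lambda(\widehat{F})-\lambda(\widehat{G})$ really coincides with $\lambda(\widehat{F-G})$, and not merely has the same multiset of absolute values. This works because both vectors are antisymmetric under the simultaneous operations of negation and order reversal, with matching numbers of zeros in the middle, so once their positive parts are forced to agree as ordered vectors the whole vectors agree. Once this bookkeeping is in place, the Hermitian equality case does all the real work and the block computation is purely mechanical.
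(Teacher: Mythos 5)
Your proof is correct and follows essentially the same route as the paper: the self-adjoint dilation $\widehat{F},\widehat{G}$, the identification of the hypothesis with $(\lambda(\widehat{F})-\lambda(\widehat{G}))^{\downarrow}=\lambda(\widehat{F}-\widehat{G})$, an appeal to Proposition \ref{eq in Lid hermitian} with $A=\widehat{G}$, $B=\widehat{F}-\widehat{G}$, and the block computation of $\widehat{F}\widehat{G}$. The only difference is that you spell out the rearrangement bookkeeping that the paper leaves implicit, which is a harmless (and arguably welcome) addition.
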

\begin{proof}
Let $\widehat{F},\widehat{G}\in\cM_{m+n}$ be the matrices defined above. Note that  $B=\widehat{F}-\widehat{G}$ and $A=\widehat{G}$ are Hermitian matrices  satisfying 
$$
\lambda(A)=(s(G), -s(G)^\uparrow)\peso{and} \lambda(B)=(s(F-G),-s(F-G)^\uparrow),
$$
where $s(G)^\uparrow$ and $S(F-G)^\uparrow$ are the vectors of singular values of $G$ and $F-G$ arranged in nondecreasing order. From the assumed equality of the singular values we obtain
$$
\left(\lambda(B+A)-\lambda(A)\right)^{\downarrow}=\lambda(B).
$$
Proposition \ref{eq in Lid hermitian} implies that $AB=BA$. This is equivalent to $\widehat{F}\widehat{G}=\widehat{G}\widehat{F}$, which  means that $GF^*=FG^*$ and $G^*F=F^*G$. 
\end{proof}

\bigskip

\noi \textit{Step 3.} Now we prove the following optimization result. 

\begin{lem}\label{lem prob nros}
Given $s\in\R^q$ such that $s_1 \geq \ldots \geq s_q \geq 0$, let $f:\{-1,0,1\}^q\to\R$ be the function defined by
$$
f(x_1, \ldots, x_q)=\sum_{j=1}^q (s_j - x_j)^2. 
$$ 
Set 
$$
r=\max \{ \, j \, : \, s_j \neq 0\, \}.
$$
For every $1 \leq k \leq q$,  the minimizers of $f$  subject to the restriction $\# \{ \, j \, : x_j \neq 0 \, \}=k$ have the following structure:
\begin{enumerate}
\item[i)] If $1\leq k <r$ and  $s_k \neq s_{k+1}$, then there is an unique minimizer  given by
$$
x_1= \ldots=x_k=1, \, x_{k+1}=\ldots = x_{q}=0.
$$ 
If $s_k = s_{k+1}$, set $\ell_k=\# \{\, j \, : \, s_j < s_k \, \}$ and $e_k=\# \{ \, j  \, : \, s_j=s_k \, \}$.
Then there are $\binom{e_k}{k-\ell_k}$ minimizers given by
\begin{align*}
& x_1= \ldots =x_{\ell_k}=1,  \  x_{\ell_k + 1}=\sigma_1, \ldots , x_{\ell_k + e_k}=\sigma_{e_k}, \ x_{\ell_k+ e_k + 1}= \ldots =x_q=0,
\end{align*}
where each $\sigma_i \in \{ \, 0, \, 1 \, \}$,  and $\# \{ \, i \, : \, \sigma_i=1 \, \}=k- \ell_k$.
\item[ii)] If $k=r$, then there is an unique minimizer  given by
$$
x_1= \ldots=x_r=1, \, x_{r+1}=\ldots x_q=0.
$$ 
\item[iii)] If $r < k \leq q$, then there are $2^{k-r} \binom{q-r}{k-r}$ minimizers given by 
\begin{align*}
x_1= \ldots =x_r=1,  \  x_{r + 1}=\sigma_1, \ldots , x_q=\sigma_{q-r}\, ,
\end{align*}
where each $\sigma_i \in \{ \, -1, \, 0, \, 1 \, \}$,  and $\# \{ \, i \, : \, \sigma_i \neq 0 \, \}=k- r$.
\end{enumerate}
\end{lem}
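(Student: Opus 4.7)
The plan is to reformulate the minimization of $f$ as a maximization of a linear function over the feasible set, and then read off the minimizers by a short case analysis.

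First I would expand: since $x_j\in\{-1,0,1\}$ we have $x_j^2=\mathbf{1}_{\{x_j\neq 0\}}$, and so under the constraint $\#\{j:x_j\neq 0\}=k$,
$$
f(x)=\sum_{j=1}^q s_j^2 - 2\sum_{j=1}^q s_j x_j + \sum_{j=1}^q x_j^2
     =\Bigl(\sum_{j=1}^q s_j^2\Bigr)+k-2\sum_{j=1}^q s_j x_j.
$$
Thus minimizing $f$ on the feasible set is equivalent to maximizing the linear functional $g(x):=\sum_{j=1}^q s_j x_j$. Since $s_j\ge 0$, if any coordinate $x_j=-1$ had $s_j>0$, flipping to $x_j=+1$ would strictly increase $g$ without changing the number of nonzero entries; hence in any maximizer one must have $x_j\in\{0,1\}$ whenever $j\le r$, while the coordinates $x_j$ with $j>r$ contribute $0$ to $g$ regardless of their sign (because $s_j=0$ there).

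Next I would split the support $\{j:x_j\neq 0\}$ as a disjoint union $A\cup B$ with $A\subseteq\{1,\dots,r\}$ and $B\subseteq\{r+1,\dots,q\}$, and $|A|+|B|=k$. By the previous paragraph, $g(x)=\sum_{j\in A}s_j$, and since $s_j>0$ for every $j\le r$, the maximum is attained by taking $|A|$ as large as possible subject to $|A|\le r$ and $|B|\le q-r$; that is, $|A|=\min(k,r)$. The remaining task is to maximize $\sum_{j\in A}s_j$ over $A\subseteq\{1,\dots,r\}$ of prescribed cardinality, which is a standard top-$a$ selection problem solved by the (not necessarily unique) index set of the $a$ largest values of $s$.

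Finally, I would split into the three cases. For $1\le k<r$: one takes $|A|=k$ and $B=\varnothing$; since $s$ is non-increasing, the $k$ largest values are $s_1,\dots,s_k$, giving uniqueness precisely when $s_k\ne s_{k+1}$, and otherwise one may freely choose $k-\ell_k$ of the $e_k$ coordinates tied at the value $s_k$ (here $\ell_k$ plays the role of the number of strictly larger singular values), producing the $\binom{e_k}{k-\ell_k}$ listed minimizers. For $k=r$: $|A|=r$ forces $A=\{1,\dots,r\}$ (all $s_j>0$ there are strictly positive), giving the unique minimizer. For $r<k\le q$: $|A|=r$ is forced and $A=\{1,\dots,r\}$, while $B$ is an arbitrary subset of $\{r+1,\dots,q\}$ of cardinality $k-r$ with each coordinate in $\{-1,+1\}$, yielding the $\binom{q-r}{k-r}2^{k-r}$ minimizers. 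There is no real obstacle; the only care needed is bookkeeping of ties in case (i) to confirm the binomial count, and observing in case (iii) that the sign freedom on $B$ is genuine because those $s_j$ vanish.
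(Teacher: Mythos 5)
Your proof is correct. It takes a mildly different route from the paper's: you first observe that $\sum_j x_j^2=k$ is constant on the feasible set, so minimizing $f$ is equivalent to maximizing the linear functional $g(x)=\sum_j s_j x_j$, and then you solve a standard ``select the top $k$'' problem. The paper instead works directly with the quadratic objective via local swap inequalities, namely $(a-1)^2<a^2+1$, $(a-1)^2<(a+1)^2$, and $(a-1)^2+b^2\leq a^2+(b-1)^2$ with equality iff $a=b$, applied recursively. The two arguments are equivalent at the core (the last swap inequality is exactly the linear comparison $b\leq a$ after expansion), but your reformulation buys a cleaner and more systematic treatment of the equality cases: the tie count $\binom{e_k}{k-\ell_k}$ and the sign freedom $2^{k-r}\binom{q-r}{k-r}$ in case (iii) fall out of the standard characterization of maximizing $k$-subsets, whereas the paper leaves the recursive bookkeeping to the reader. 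Two small points: (a) you correctly read $\ell_k$ as the number of indices with $s_j>s_k$ (the count of \emph{strictly larger} values), which is the intended meaning — the definition $\ell_k=\#\{j:s_j<s_k\}$ as printed is inconsistent with the displayed minimizer and with the identity $\ell_k+e_k=k$ used later, so it is a typo; (b) for the uniqueness claims you should make explicit that \emph{every} maximizer, not just some, has $|A|=\min(k,r)$ — this follows because if $|A|<\min(k,r)$ then $B\neq\varnothing$ and moving a nonzero entry from $B$ to an unused index $i\leq r$ with value $+1$ strictly increases $g$; your argument contains this, but the phrasing is slightly loose.
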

\begin{proof}
To prove $i)$ we first note that any minimizer must satisfy $x_i=0$, $i=r+1, \ldots, q$. This follows immediately using the inequality $(a-1)^2< a^2 +1$ for $a>0$. From the inequality $(a-1)^2<(a +1)^2$ for $a>0$, we deduce that  any minimizer satisfies $x_i \neq -1$, $i=1, \ldots ,r$.  Given two  numbers $ a \geq b$, then
$$
(a-1)^2 + b^2 \leq a^2 + (b-1)^2,
$$
and  equality holds if and only if $a=b$. Using this elementary inequality recursively, it is easy to see that a minimizer is obtained by taking $x_1=\ldots=x_k=1$ when $s_k\neq s_{k+1}$. On the other hand, it is clear that if $s_k=s_{k+1}$, the minimizer is not unique. Indeed, as before, any minimizer must have $x_1=\ldots =x_{\ell_k}=1$. But now $s_{\ell_k + 1}= \ldots =s_{\ell_k + e_k}=s_k$, which implies that there are  $\binom{e_k}{k-\ell_k}$ possible choices to place the remaining $k- \ell_k$ ones. 

The proofs of $ii)$ and $iii)$ are similar. We only remark that in the last item, one also has to take into account minimizers satisfying $x_i=\pm 1$ for $i>r$. 
\end{proof}

\bigskip

\noi \textit{Step 4.} Suppose that $\Phi$ is a strictly convex symmetric gauge function. Let $X$ be a partial isometry of rank $k$ such that $\|F-X\|_\Phi=\|F-U_k\|_\Phi$. We have shown in the proof of Theorem \ref{teo 1 matrix} that $s(F-U_k)  \textstyle{\smayo_w} s(F-X)$. According to Lemma \ref{prop strict conv}, it follows that $s(F-U_k)=s(F-X)$. Then, using the equality (\ref{lid eq}) we get
\begin{equation}\label{eq in lidskii sing v}
s(F-X)=| s(F) - s(X) |^{\downarrow}. 
\end{equation}
Now Lemma \ref{rem lidsk eq sing val} gives $XF^*=FX^*$ and $X^*F=F^*X$.  From these latter relations, one can prove that there exist  $V_0 \in \cU_m$ and $W_0 \in \cU_n$ such that $F=V_0 D_F W_0^*$ and $X=V_0 D_X W_0^*$, where $D_F, D_X \in \cM_{m,n}$ are diagonal matrices with real coefficients. Furthermore, $D_F$ can be taken to be with non negative coefficients (see \cite[Thm. II]{EY36}).

Since $|F|=W_0|D_F|W_0^*$ and $(D_F)_{ii}\geq 0$, we see that $s_{j_i}(F)=(D_F)_{ii}$ for some permutation $j_1, \ldots , j_q$ of the integers $1, \ldots, q$. Thus, we can find two permutation matrices $P \in \cM_m$ and $Q \in \cM_n$ satisfying $\Sigma=PD_FQ$. Put $V_1=V_0P$ and $W_1=W_0Q$.  Therefore $F=V_1\Sigma W_1^*$ is an SVD. We can also write $X=V_1 D_X' W_1^*$, where $D_X'=PD_XQ$ is diagonal. Further, note that $X^*X=W(D_X')^*D_X'W^*$ is a projection, so  its eigenvalues are $0$ and $1$, and thus, $(D_X')_{ii} \in \{ \, -1, \, 0, \, 1  \,\}$.

Then, suppose that $L \in \cM_{m,n}$ is a diagonal matrix such that $L_{ii} \in \{ \, -1, \, 0, \, 1 \, \}$ and $\# \{ \, i \, : \, L_{ii} \neq 0 \}=k$. So we have that $Y=V_1 L W_1^*$ is a partial isometry of rank $k$. Since $s(F-X)=s(F-U_k)$, the partial isometry $X$ is a minimizer for every unitarily invariant norm. In particular, we can use the Frobenious norm:  
\begin{align}
\sum_{i=1}^q (s_i(F) - (D_X')_{ii})^2  & = \| \Sigma - D_X'\|_2 ^2 = \| F- X\|_2^2  \leq \|F- Y\|_2^2 \nonumber \\
&  = \| \Sigma - L\|_2^2 = \sum_{i=1}^q (s_i(F) - L_{ii} )^2 . \label{eq diag 1}
\end{align}	
We have to consider three cases according to Lemma \ref{lem prob nros}. Note that $r=\rank(F)$. We first assume that $k<r$ and $s_k(F)\neq s_{k+1}(F)$ to obtain
$$
(D_X')_{11}= \ldots = (D_X ')_{kk}=1, \, (D_X')_{k+1 \, k+1}= \ldots =(D_X')_{qq}=0. 
$$ 
Hence,
$$
X=V_1 \begin{bmatrix}  I_k & 0 \\ 0 & 0_{m-k,n-k}\end{bmatrix} W_1 ^*.
$$
If  $s_k(F)=s_{k+1}(F)$, then 
\begin{align*}
& (D_X ')_{11}= \ldots =(D_X ')_{\ell_k \,\ell_k}=1, \\ 
& (D_X ')_{\ell_k + 1 \, \ell_k + 1}=\sigma_1, \ldots , (D_X ')_{\ell_k + e_k \, \ell_k + e_k}=\sigma_{e_k}, \\
& (D_X ')_{\ell_k+ e_k + 1 \, \ell_k+ e_k + 1}= \ldots =(D_X ')_{qq}=0,
\end{align*}
where $\sigma_i \in \{ \, 0, \, 1 \, \}$, $i=1 \ldots, e_k$ and $\# \{ \, i \, : \, \sigma_i=1 \, \}=k- \ell_k$. We can get two permutation matrices $P' \in \cM_m$  and $Q' \in \cM_n$ such that 
$$  
P'D_X'Q'= \begin{bmatrix}   I_k & 0  \\   0 & 0_{m-k,n-k}      \end{bmatrix}. 
$$
Thus,
$$
X=V_1P' \begin{bmatrix}  I_k & 0 \\ 0 & 0_{m-k,n-k}\end{bmatrix} Q'W_1 ^*,
$$
where $V_1P'$, $W_1Q'$ are unitaries associated to some SVD of $F$ by Remark \ref{unitaries SVD}. Indeed, notice that $P'$ and $Q'$
interchange rows and columns corresponding to the multiplicity of the singular value $s_k(F)$.

The case where $k=r$ follows similarly. If $r<k\leq q$, then we have that 
\begin{align*}
(D_X ')_{11}= \ldots = (D_X ')_{rr}=1,    (D_X ')_{r+1 \, r + 1}=\sigma_1, \ldots , (D_X ')_{qq}=\sigma_{q-r}\, ,
\end{align*}
where each $\sigma_i \in \{ \, -1, \, 0, \, 1 \, \}$,  and $\# \{ \, i \, : \, \sigma_i \neq 0 \, \}=k- r$. Consider the unitary matrix
defined by 
$$R=
\left\{ \begin{array}{l} \text{diag}(\sgn (\sigma_1) , \ldots, \sgn (\sigma_{q-r})) , \, \,\, \, \,\, \, \,\, \,\, \,\, \, \, \,\, \,\, \,\, \, \, \,\, \,\, \,\, \, \,   \, \, \, \, \text{if $q=m$,} \\
\text{diag}(\sgn (\sigma_1) , \ldots, \sgn (\sigma_{q-r}), I_{m-q}) ,  \, \, \,\, \, \, \,\, \,\, \,\, \, \,   \, \, \, \, \text{ if $q=n$.}
\end{array} \right. 
$$
Then, the partial isometry $X$ can be expressed as 
$$
X=V_1 \begin{bmatrix}  I_r & 0 \\ 0 & R \end{bmatrix} 
\begin{bmatrix}  I_k & 0 \\ 0 & 0_{m-k,n-k}\end{bmatrix} W_1 ^*.
$$
From Remark \ref{unitaries SVD}, the unitary  $V_1 \, \text{diag}(I_r, R)$ is associated to 
some SVD of $F$. This completes the proof.

\subsection{Proof of Theorem \ref{parametrization of min}}

Let $F=V\Sigma W^*$ be an SVD. In order to prove $i)$,  we first note that by Theorem \ref{teo 1 sub 1 matrix} and Remark \ref{unitaries SVD} the minimizers have the form  
\begin{equation}\label{form uk}
U_k=V\begin{bmatrix} D & 0 \\ 0 & R_1 \end{bmatrix} 
\begin{bmatrix} I_k & 0 \\ 0 & 0_{m-k,n-k} \end{bmatrix} 
\begin{bmatrix} D^* & 0 \\ 0 & R_2 \end{bmatrix}
W^* , 
\end{equation}
where $D \in \cU_r$ is  block diagonal  with $D_{ij}=0$ if $s_i(F)\neq s_j(F)$, and $R_1 \in \cU_{m-r}$, $R_2 \in \cU_{n-r}$ are arbitrary.
Recalling that  
$$
\ell_k=\#\{j:\ s_j(F)< s_k(F) \}, \, \, \, \, \, \, \,  e_k=\# \{ \, j  \, : \, s_j=s_k \, \},
$$ 
we see that  $\ell_k + e_k=k$ when  $s_k(F)\neq s_{k+1}(F)$. Therefore any matrix $D$ as above can be written as $D=\text{diag}(D_1,D_2)$, where $D_1 \in \cU_k$ and $D_2 \in \cU_{r-k}$. Then the  expression in \eqref{form uk} reduces to 
\begin{equation}\nonumber
U_k=V \begin{bmatrix} I_k & 0 \\ 0 & 0_{m-k,n-k} \end{bmatrix} 
W^* , 
\end{equation} 
which is the unique minimizer. 

Suppose now that $s_k(F)=s_{k+1}(F)$. This gives  $0<k-\ell_k<e_k$, and the matrices $D$  may be written as 
$D=\text{diag}(D_1,D_2,D_3)$, where $D_1 \in \cU_{\ell_k}$, $D_2 \in \cU_{e_k}$ and $D_3 \in \cU_{r-\ell_k-e_k}$. Hence $U_k$ has the form
$$
U_k= V\begin{bmatrix}  I_{\ell_k} & 0 & 0 \\ 0  & D_2 \begin{bmatrix} I_{k-\ell_k} & 0 \\ 0 & 0 \end{bmatrix} D_2^* & 0 \\ 0 & 0 & 0 \end{bmatrix} W^*.
$$
Since every orthogonal projection $P \in \cM_{e_k}$ of rank $k-\ell_k$ can be expressed as 
$$
P=D_2 \begin{bmatrix} I_{k-\ell_k} & 0 \\ 0 & 0 \end{bmatrix} D_2^*
$$
for some $D_2 \in \cU_{e_k}$, this proves the desired parametrization of the minimizers. 

We can proceed analogously in item $ii)$. It follows that
\begin{equation}\nonumber
U_r=V\begin{bmatrix} D & 0 \\ 0 & R_1 \end{bmatrix} 
\begin{bmatrix} I_r & 0 \\ 0 & 0_{m-k,n-k} \end{bmatrix} 
\begin{bmatrix} D^* & 0 \\ 0 & R_2 \end{bmatrix}
W^* = V\begin{bmatrix} I_r & 0 \\ 0 & 0_{m-k,n-k} \end{bmatrix} 
W^*=U
\end{equation}
turns out to be the unique minimizer. 

To prove  item $iii)$, we compute as above
$$
U_k= V\begin{bmatrix} D & 0 \\ 0 & R_1 \end{bmatrix} 
\begin{bmatrix} I_k & 0 \\ 0 & 0_{m-k,n-k} \end{bmatrix} 
\begin{bmatrix} D^* & 0 \\ 0 & R_2 \end{bmatrix}
W^*
= 
V  \begin{bmatrix} I_r & 0 \\ 0 &  R_1 \begin{bmatrix} I_{k-r}  & 0 \\ 0 & 0  \end{bmatrix} R_2^*\end{bmatrix} 
 W^*  ,
$$
where $R_1 \in \cU_{m-r}$ and $R_2 \in \cU_{n-r}$. Note that every partial isometry $S$ of rank $k-r$ can be written as 
$S=R_1 \begin{bmatrix} I_{k-r}  & 0 \\ 0 & 0  \end{bmatrix} R_2^*$ for some unitary matrices $R_1$ and $R_2$.

\subsection{Proof of Theorem \ref{fix the subspace}}

Set
$$
U_k=V\begin{bmatrix} I_k & 0 \\ 0 & 0_{m-k,n-k} \end{bmatrix}W^* , 
$$
where $V$ and $W$ are unitary matrices such that $P_\cS F=V\Sigma W^*$ is an SVD. 
Let $X$ be the synthesis matrix of a Parseval frame in $\cJ_\ese$. If $\{ v_j\}_1^n$ is an orthonormal basis of $\C^n$, then using Pythagoras' theorem we get that
\begin{align*}
\|F-X\|^2_2&=\sum_{j=1}^n\|(F-X)v_j\|^2\\&=\sum_{j=1}^n\big(\|(\sub{P}{\ese}F-X)v_j\|^2+\|(\sub{P}{\ese^\bot}F)v_j\|^2\big)\\
&=\|\sub{P}{\ese}F-X\|^2_2+\|\sub{P}{\ese^\bot}F\|^2_2 \\
&\geq \|\sub{P}{\ese}F-U_k\|^2_2+\|\sub{P}{\ese^\bot}F\|^2_2 =\| F - U_k\|_2^2 \, . 
\end{align*}
Note that the inequality follows by Theorem \ref{teo 3 matrix}, and in the last equality we have used that $\ran(U_k)\subseteq \ran(P_\cS F)\subseteq \cS$.   
This finishes the proof. 


{\sc (Jorge Antezana)} {Departamento de de Matem\'atica, FCE-UNLP, Calles 50 y 115, 
(1900) La Plata, Argentina  and Instituto Argentino de Matem\'atica, `Alberto P. Calder\'on', CONICET, Saavedra 15 3er. piso,
(1083) Buenos Aires, Argentina.}     
                                               
\noi e-mail: {\sf antezana@mate.unlp.edu.ar}

\bigskip

{\sc (Eduardo Chiumiento)} {Departamento de de Matem\'atica, FCE-UNLP, Calles 50 y 115, 
(1900) La Plata, Argentina  and Instituto Argentino de Matem\'atica, `Alberto P. Calder\'on', CONICET, Saavedra 15 3er. piso,
(1083) Buenos Aires, Argentina.}     
                                               
\noi e-mail: {\sf eduardo@mate.unlp.edu.ar}

\end{document}